\documentclass[a4paper,11pt]{article}

\usepackage[utf8]{inputenc}
\usepackage[T1]{fontenc}
\usepackage[english]{babel}
\usepackage{amsmath,amssymb,amsthm}
\usepackage{mathtools}
\usepackage{hyperref}
\usepackage{xcolor}
\usepackage{authblk}
\usepackage{caption}
\usepackage{gensymb}
\usepackage{bbm}
\usepackage{enumitem}
\usepackage{geometry}
\usepackage{graphicx}
\usepackage[
    style=numeric, 
    backend=biber,
    doi=true,
    isbn=false,
    url=true
]{biblatex}
\theoremstyle{definition} 
\newtheorem{theorem}{Theorem}[section]
\newtheorem{lemma}[theorem]{Lemma}
\newtheorem{proposition}[theorem]{Proposition}
\newtheorem{definition}[theorem]{Definition}
\newtheorem{remark}[theorem]{Remark}
\newtheorem{note}[theorem]{Note}
\newtheorem{claim}[theorem]{Claim}
\newtheorem{corollary}[theorem]{Corollary}
\newtheorem{example}[theorem]{Example}

\newcommand{\R}{\mathbb{R}}
\newcommand{\Haus}{\mathcal{H}}

\title{The lens cluster and triod cluster uniquely minimize the anisotropic perimeter in $\R^2$}
\author{Paula Benítez Sesmilo}
\affil{CFIS, UPC}
\date{}
\begin{document}
\maketitle

\begin{abstract}
\noindent $(N, M)$-clusters are partitions of $\R^d$ into $N+M$ regions, where $N$ chambers have prescribed finite measure and $M$ chambers have infinite measure. \emph{Locally minimizing clusters} are the configurations which minimize the perimeter among all competitors with compact support satisfying the same measure constraints.
The characterization of these partitions has been widely studied for the standard (isotropic) perimeter. In the present paper, we investigate the corresponding problem for anisotropic perimeters, considering a general anisotropy. More specifically, we focus on $(1,2)$-clusters and $(1,3)$-clusters in $\R^2$. Our main results provide a geometric characterization of these local minimizers: for regular (smooth, symmetric, and uniformly convex) anisotropies, we prove that a cluster is a local minimizer if and only if, up to translations, it is a \emph{standard anisotropic lens cluster} in the $(1,2)$-cluster case, or a \emph{standard anisotropic triod cluster} in the $(1,3)$-cluster case. In addition, using an approximation argument, we extend the minimizing property of these configurations to general anisotropies. 
\end{abstract}

\tableofcontents

\section{Introduction}
Classical isoperimetric problems consist of finding, among all sets of given measure (generally Lebesgue measure), those whose boundary has the smallest possible perimeter; i.e., the set $E \subset \mathbb{R}^d$ that minimizes the perimeter $P(E)$. Cluster problems arise as a natural extension of the classical isoperimetric problem. One studies collections of disjoint measurable sets, called chambers, with prescribed (finite or infinite) measures. These configurations are commonly named $(N,M)$-clusters, where $N$ denotes the number of chambers with finite measure and $M$ the number of chambers with infinite measure. Their geometry aims to minimize the total surface area between the chambers. Since interfaces shared by two chambers are counted only once, optimal configurations usually exhibit nontrivial geometric structures, giving rise to the mathematical theory of clusters. The existence of such configurations is guaranteed by the direct method of the calculus of variations \cite{Almgren1976}.

Cluster problems have been extensively studied in the isotropic case. For the most basic case, $N=1$, it is established that the solution which minimizes the perimeter for a prescribed volume is a ball.
In the planar case ($d=2$),  minimizers of $(N, 1)$-clusters admit a rigorous characterization \cite{Morgan1994}. In this framework, minimizers are composed of straight segments and circular arcs meeting at triple junctions, where the angles between interfaces are equal to $120^\circ$, often referred to as the Steiner rule. In the case of multiple regions, $N\geq 1$, the natural cluster is called the \emph{standard N-bubble}, and it is conjectured to be the unique minimizer, up to isometries. For the case $N=2$, the unique minimizer is the standard double bubble, proved for $d=2$ by Foisy et al. \cite{Foisy1993} and for $d \geq 2$ by Hutchings-Morgan-Ritoré-Ros \cite{Hutchings2004}. Regarding larger clusters, results have been obtained for the triple-bubble problem ($N=3$) in the plane by Wichiramala \cite{Wichiramala2004}, and have been established for $d \geq 2$ by Milman and Neeman \cite{Milman2022}. The same authors proved the isoperimetric property of standard clusters for $N=4$ ($d \geq 4$) \cite{Milman2022} and $N=5$ ($d \geq 5$) \cite{Milman2023}. 

More recently, attention has also turned to cluster problems involving more than one infinite measure chamber. In this case, the total perimeter of the configuration is necessarily infinite, and global minimization must be replaced by a local variational notion, as introduced by Alama, Bronsard, and Vriend in \cite{Alama2023}. In that work, they studied planar partitions consisting of one bounded region and two unbounded ones, corresponding to a $(1,2)$-cluster. They proved the existence and uniqueness of the so-called \emph{standard lens cluster}, whose boundary consists of two symmetric circular arcs joined to two half-lines at triple junctions satisfying the Steiner rule. For their proof, they relied on the existence of the double bubble cluster. Novaga, Paolini and Tortorelli generalized the properties of this type of partitions, proving a \emph{closure theorem} for local minimizers when the limiting cluster has flat interfaces outside a compact set \cite{Novaga2023}. In the same paper, they proved that for locally minimizing clusters in the plane the maximum number of infinite chambers for a local minimizer is $3$. They also characterized $(N, M)$-cluster minimizers for $N+M\leq 4$: specifically, for the $(2,2)$-cluster we have the \emph{peanut partition}, and for the $(1,3)$-cluster the \emph{Reuleaux partition}.

    \begin{figure}[htbp]
    \centering
    \includegraphics[width=0.75\textwidth]{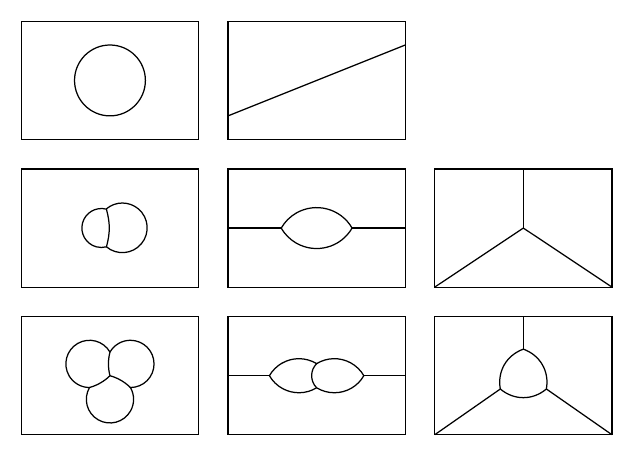} 
    \caption{Local minimizing clusters for the plane. Columns from left to right represent configurations with one, two, and three infinite chambers, respectively. The rows show an increasing number of finite chambers. }
    \label{fig:wulff_lens}
\end{figure}

Motivated by these results, it is natural to investigate how the results change when the Euclidean perimeter is replaced by an anisotropic surface energy. In this paper, we work with the anisotropic perimeter, which is given by an l.s.c. function $\phi : \mathbb{S}^{d-1} \rightarrow (0, + \infty)$, that extends to $\R^d$ as a norm (convex and positively 1-homogeneous). The anisotropic perimeter of any set $E \subseteq \mathbb{R}^d$ of locally finite perimeter is defined by
\[
P_\phi(E)= \int_{\partial^*E} \phi(\nu_E(x)) \, d\mathcal{H}^{d-1}(x).
\]
where $\partial ^* E$ is the reduced boundary of $E$, i.e., for every $x \in \partial ^* E $ there exists an outer normal vector to $E$ at $x$, $\nu_E(x)$.
The anisotropy breaks the rotational invariance of the problem and modifies the geometry of the interfaces and the equilibrium conditions at the junctions, as previously discussed by Franceschi, Pratelli, and Stefani \cite{Franceschi2023}. The authors prove that, if the anisotropy is $\mathcal{C}^1$ and uniformly convex, and the studied cluster satisfies suitable growth and volume-adjustment assumptions, then the local minimizing cluster is a locally finite union of $\mathcal{C}^1$ arcs, and each junction point is the endpoint of exactly three different arcs, arriving with three different tangent vectors \cite[Theorem A]{Franceschi2023}. In this setting, the tangent vectors at triple points satisfy the anisotropic Young's law \cite[Theorem 3.6]{Franceschi2023}. 

Essential to the anisotropic isoperimetric problem is the Wulff shape, which is the unique (up to translations) minimizer of the anisotropic perimeter under a volume constraint \cite{Maggi2012}. In this sense, the Wulff shape generalizes the Euclidean ball and provides the natural building block for anisotropic minimal surfaces.

We study the minimization of the anisotropic perimeter of a $(1, 2)$-cluster (one finite region and two infinite regions) and a $(1, 3)$-cluster (one finite region and three infinite regions) in $\R^2$. In analogy with the isotropic case, one therefore expects the anisotropic cluster minimizers to be constructed from arcs of the Wulff shape and straight segments, arranged so as to satisfy suitable anisotropic balance conditions at junctions. In this paper, we prove that for a symmetric and regular anisotropy, a cluster is a local minimizer if and only if, up to translations, it is given by the \emph{standard anisotropic lens cluster} for the anisotropic isoperimetric problem involving one region of finite volume $m$ and two regions of infinite volume (Theorem~\ref{th:ifonlyif_lens}). Moreover, the same result is obtained for the analogous problem involving one region of finite volume $m$, and three regions of infinite volume, where the optimal configuration is the \emph{standard anisotropic triod cluster} (Theorem~\ref{th:ifonlyif_triod}). Furthermore, we obtain the minimizing property of these configurations for the general anisotropy case,  by using an approximation argument (Corollary~\ref{cor:mainresult_cor_lens} and Corollary~\ref{cor:mainresult_cor_triod}).\\

In Section~\ref{section:notation}, we establish important definitions for our study, as well as some properties of the Wulff shape. In Section~\ref{section:regularity_th}, we collect some important results that we will use to prove the existence and uniqueness of our minimizers. Using Theorem~\ref{theorem:boundary_cond}, we obtain a characterization of the asymptotic behavior of our partitions, allowing us to restrict the problem to a ball $B_R$. Then, in Section~\ref{section:local_pb}, we characterize the geometry of the minimizer in the ball and deduce its uniqueness for the regular anisotropy case. Moreover, we prove it preserves the minimizing property for general anisotropies. In Section~\ref{section:main}, we use Theorem~\ref{theorem:closure}, a closure theorem, to extend both results by passing to the limit as $R\rightarrow\infty$, obtaining the minimizers in $\R^2$.

\section{Notation and preliminary results} \label{section:notation}
A measurable set $E \subset \R^d$ is said to be a \emph{Caccioppoli set} if the distributional derivative $D\chi_E$ of its characteristic function is a Radon vector measure. 
For any open set $B \subset \R^d$, we define the standard perimeter of $E$ in $B$ by
\[
P(E,B) \coloneq \|D\chi_E\|(B),
\]
and we set $P(E) \coloneq P(E,\R^d)$.

Throughout the paper, sets are identified up to Lebesgue negligible sets. 
In particular, we choose representatives such that the measure theoretic boundary
coincides with the topological boundary. 
The reduced boundary $\partial^*E \subset \partial E$ comprises the points where the approximate outer normal unit vector $\nu_E$ can be defined so that  $D\chi_E = \nu_E \cdot \|D\chi_E\|$. If $E$ is a Caccioppoli set then
\[
P(E,B) = \Haus^{d-1}(\partial^*E \cap B)
\qquad \forall B \subset \R^d \text{ open set }.
\]

\subsection{Partitions and anisotropic perimeter} 
\begin{definition} [Partition]
Let $\mathcal{E} = (E_1, \dots,E_S)$ be an S-tuple of measurable subsets of $\R^d$, and $\Omega$ an open set. We say that $\mathcal{E}$ is an \emph{S-partition} (of $\Omega$), with \emph{regions} $E_1,\dots, E_S$, if $ |E_i \cap E_j \cap \Omega | = 0 $ $\forall i \neq j$, and $|\Omega \setminus \bigcup_i E_i | = 0$.

The \emph{boundary} $\partial \mathcal{E}$ of a partition is the set of all interfaces between the regions:
\[
\partial \mathcal{E} \coloneq \bigcup_{k=1}^S \partial E_k
\]

We define the \emph{perimeter} of $ \mathcal{E}$ on any open set $B\subset \R^d$ as:
\[
P(\mathcal{E}, B) \coloneq \frac{1}{2}\sum_{k=1}^S P(E_k, B), \qquad P(\mathcal{E})= P(\mathcal{E}, \R^d)
\]

\end{definition}

\begin{note}
    Notice that, unlike in bounded domains $\Omega$, when working in $\R^d$ the regions of a partition may have either finite or infinite Lebesgue measure:
    \[
        0 \le |E_i| \le +\infty .
    \]
    It is convenient to adopt the notation used in \cite{Alama2023}, dividing $\mathcal{E}$ into
    \[
     \mathcal{E} =(\mathcal{G}, \mathcal{F})= (G_1,\dots, G_N,\, F_1,\dots, F_M),
    \]
    where 
    \[
    |G_j| < \infty \quad 1 \leq j \leq N,
    \qquad
    |F_i| = \infty \quad 1 \leq i \leq M,
    \qquad
    N + M = S.
    \]
    This way one obtains a $(N, M)$-tuple, distinguishing \emph{proper} regions (finite volume) from \emph{improper} ones (infinite volume), a convention which will be useful when formulating local perimeter-minimization problems in $\R^d$.
    
\end{note}

\begin{definition}[Locally minimizing partition / locally isoperimetric partition]
Let $\mathcal{E}=(E_1,\dots,E_S)$ be a partition of $\Omega \subset \R^d$.
We say that $\mathcal{E}$ is \emph{locally minimizing} (or a \emph{locally isoperimetric
partition}) if for every compact set $B\subset\Omega$ and every competitor partition 
$\mathcal{F}$ such that

\[
 |E_i\Delta F_i \setminus B |=0,
\qquad
|F_i| = |E_i| \ \text{for all regions with finite volume},
\]

one has
\[
P(\mathcal{E};B) \le P(\mathcal{F};B).
\]

\end{definition}

\begin{definition}[Eventually flat partitions]
An $S$-partition $\mathcal{E}=(E_1, \dots, E_S)$ of $\R^d$ is \emph{eventually flat} if for every pair of indices $i \neq j$  such that $E_i$ and $E_j$ have infinite measure, there exists a $(d-1)$-dimensional half space contained in the interface $\partial E_i\cap \partial E_j$.
    
\end{definition}

\begin{definition}[Anisotropic perimeter of a set]
Let $E \subset \R^d$ be a set of finite perimeter, and let 
$\phi : \mathbb{S}^{d-1} \to (0,+\infty)$ be a continuous and positive anisotropic density.
The \emph{anisotropic perimeter} of $E$ in an open set $B \subset \R^d$ is defined by

\[
P_\phi(E;B)
:= \int_{\partial^* E \cap B} \phi(\nu_E(x)) \, d\mathcal{H}^{d-1}(x).
\]
The anisotropic perimeter of $E$ is $P_\phi(E)=P_\phi(E;\R^d)$.

We define the \emph{anisotropic perimeter} of a partition $\mathcal{E}=(E_1,\dots,E_S)$ on any open set $B \subset \R^d $ as

\[
P_\phi(\mathcal{E};B) = \frac{1 }{2} \sum_{i=1}^{S} P_\phi(E_i; B).
\]

In the planar case with a symmetric anisotropic density, we can express the perimeter of a partition in a much simpler way:
\[
P_\phi(\mathcal{E};B)
:= \int_{\partial^* \mathcal{E} \cap B} \phi(\nu_\mathcal{E}(x)) \, d\mathcal{H}^1(x),
\]
where $\partial^* \mathcal{E}$ is the reduced boundary of $\mathcal{E}$ and $\nu_\mathcal{E}$ is the measure–theoretic unit normal.

\end{definition}

\begin{note}
In the rest of the text, we assume that the extension of the anisotropic density $\phi : \mathbb{R}^2 \to [0,+\infty)$ is a positively 1-homogeneous convex function. We call an anisotropy satisfying these properties a \emph{general anisotropy}. Since the anisotropic density $\phi$ is continuous on a compact set, there exist constants $0 < \phi_{\min} \le \phi_{\max} < +\infty$ such that $
            \phi_{\min} \le \phi(\nu) \le \phi_{\max}$ for all $\nu \in \mathbb{S}^1$. Thus, its extension is locally bounded.
    
\end{note}

\begin{note}[Properties of regular anisotropies]\label{note:properties_anisotropy} 
We say an anisotropy is \emph{regular} if it is a general anisotropy satisfying the following properties:

    \begin{enumerate}[label=\roman*)]
    \item Regularity: 
        $\phi$ is of class $\mathcal{C}^2(\mathbb{R}^2 \setminus \{0\})$. 

    \item Uniform convexity:
$\phi$ is uniformly convex. This geometric condition means that the unit ball $\mathcal{W}_\phi = \{ \nu : \phi(\nu) \le 1 \}$ has strictly positive curvature.
\\

    \item Symmetry:
    $\phi$ is a symmetric function, i.e., $\phi(\nu)=\phi(-\nu)$ for all $ \nu \in \mathbb{S}^1 $.
\end{enumerate}

\end{note}

\begin{theorem}[Vol'pert] \label{theorem:volpert}
Let $E \subseteq \R^2$ be a set of locally finite perimeter, and let $x \in \R^2$ be fixed. Then, for a.e. $r > 0$, one has that
\[
\partial^* E \cap \partial B(x,r) = \partial^* (E \cap \partial B(x,r)) .
\]
\end{theorem}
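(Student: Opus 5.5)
The natural route is to reduce the statement to the classical Vol'pert slicing theorem for sets of finite perimeter cut by hyperplanes, as found in Maggi's book or in Ambrosio--Fusco--Pallara. The only extra ingredient is a change of variables that turns the circles $\partial B(x,r)$ into straight lines. Throughout, $E\cap\partial B(x,r)$ is understood as a subset of the one-dimensional manifold $\partial B(x,r)$. Accordingly, $\partial^*(E\cap\partial B(x,r))$ denotes its reduced boundary relative to the circle, i.e. the finite set of \emph{jump points} of the one-dimensional $BV$ function $\chi_E|_{\partial B(x,r)}$. The equality is understood up to $\mathcal{H}^0$-null sets, i.e.\ as an actual equality of finite sets for a.e.\ $r$.

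\emph{Step 1 (localization and polar coordinates).} Translate so that $x=0$, and fix $0<r_0<r_1<\infty$. Since a countable union of null sets is null, it suffices to prove the statement for a.e.\ $r\in(r_0,r_1)$. On the annulus $A=\{r_0<|y|<r_1\}$, cover the angular variable by two overlapping open arcs. On each arc the polar map $\Psi(\rho,\theta)=(\rho\cos\theta,\rho\sin\theta)$ is a smooth bi-Lipschitz diffeomorphism from a rectangle $(r_0,r_1)\times I$ onto a sector. Sets of locally finite perimeter are preserved under such diffeomorphisms. Reduced boundaries correspond as well, $\partial^*(\Psi^{-1}(E))=\Psi^{-1}(\partial^*E)$, with normals transformed by the cofactor matrix $D\Psi^{-T}$ (normalized). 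Thus $\tilde E=\Psi^{-1}(E)$ has locally finite perimeter in the rectangle, and the circles $\{|y|=\rho\}$ become the vertical lines $\{\rho\}\times I$.

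\emph{Step 2 (slicing by lines).} Apply the Vol'pert theorem for sections of $\tilde E$ by the lines $\{\rho\}\times I$. For a.e.\ $\rho$, the slice $\tilde E_\rho=\{\theta:(\rho,\theta)\in\tilde E\}$ is a one-dimensional set of finite perimeter. Moreover, $\partial^*\tilde E_\rho=(\partial^*\tilde E)_\rho$, and the tangential component of $\nu_{\tilde E}$ is nonzero there and has the sign of $\nu_{\tilde E_\rho}$. The proof of this step rests on the coarea formula on rectifiable sets applied to $\partial^*\tilde E$ with the Lipschitz function $(\rho,\theta)\mapsto\rho$. Combined with the BV characterization of $\chi_{\tilde E}$ via one-dimensional sections, it shows that $D\chi_{\tilde E_\rho}$ equals the disintegration of the $\theta$-component of $D\chi_{\tilde E}$. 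Finally, one removes the null set of $\rho$ for which $\partial^*\tilde E$ meets $\{\rho\}\times I$ on a set of positive $\mathcal{H}^1$ measure, or contains non-transversal points carrying positive $\mathcal{H}^0$ mass.

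\emph{Step 3 (transfer back).} Since $\Psi$ restricted to $\{\rho\}\times I$ is an isometry up to the constant factor $\rho$ onto an arc of $\partial B(0,\rho)$, the reduced boundary of the slice is mapped to the relative reduced boundary of $E\cap\partial B(0,\rho)$. Combining this with Step 1 gives $\partial^*E\cap\partial B(0,r)=\partial^*(E\cap\partial B(0,r))$ for a.e.\ $r\in(r_0,r_1)$. The two angular charts are glued on their overlap, which consists of open arcs where both descriptions agree. Letting $r_0\downarrow0$ and $r_1\uparrow\infty$ along sequences concludes the proof.

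\emph{Main obstacle.} The delicate point is Step 2, specifically the identification $\partial^*\tilde E_\rho=(\partial^*\tilde E)_\rho$ \emph{pointwise}, not merely as measures. Equality of measures $D\chi_{\tilde E_\rho}=(D_\theta\chi_{\tilde E})_\rho$ follows from the coarea formula. To upgrade it to equality of point sets, I would first use De Giorgi's structure theorem, which gives that $\partial^*\tilde E$ is $\mathcal{H}^1$-rectifiable, and then apply the coarea formula on this rectifiable set. Together these show that for a.e.\ $\rho$ every point of $(\partial^*\tilde E)_\rho$ has a nonzero tangential normal component and is a jump point of the slice. The converse inclusion then follows by comparing total variations.
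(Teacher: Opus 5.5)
Your argument is correct. Note, however, that the paper gives no proof of this statement: it quotes Vol'pert's theorem, in the form used by Franceschi--Pratelli--Stefani, as a black box. It uses it only to know that $E\cap\partial B(x,r)$ is, for a.e.\ $r$, essentially a finite union of arcs.

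Your reduction is the standard way to get the spherical version from the hyperplane one. You use polar charts on an annulus that turn the circles $\partial B(x,r)$ into vertical lines, apply the classical line-slicing theorem, and transfer back, gluing two angular charts. It also makes explicit two points the paper leaves implicit:
\begin{itemize}
\item $\partial^*(E\cap\partial B(x,r))$ must be read relative to the circle. As a subset of $\R^2$, the set $E\cap\partial B(x,r)$ is Lebesgue-null, so its reduced boundary would be empty.
\item In the plane, the usual $\mathcal{H}^{0}$-a.e.\ identity is an exact equality of finite sets.
\end{itemize}

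Two small points would tighten it.
\begin{itemize}
\item In Step 2, the line-slicing theorem is usually stated for sets in $\R^n$. You should invoke its version for $BV$ functions on an open set (as in Ambrosio--Fusco--Pallara), or intersect $\tilde E$ with a slightly smaller rectangle whose sides are chosen so that the intersection has finite perimeter.
\item The exact identity $\partial^*(\Psi^{-1}(E))=\Psi^{-1}(\partial^*E)$ is true, via the blow-up characterization of reduced-boundary points, but it is more than you need. Any $\mathcal{H}^1$-null discrepancy $N$ satisfies $\int^*\mathcal{H}^0(N\cap\partial B(x,r))\,dr\le\mathcal{H}^1(N)=0$ by the coarea (Eilenberg) inequality for the $1$-Lipschitz function $|y-x|$, so it misses a.e.\ circle. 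Correspondence of reduced boundaries up to $\mathcal{H}^1$-null sets therefore suffices.
\end{itemize}

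You could also avoid charts altogether. Apply the coarea formula on $\partial^*E$ directly to $f(y)=|y-x|$, whose tangential Jacobian is $|\nu_E\cdot e_\theta|$, with $e_\theta$ the unit tangent to the circle. The disintegration argument in your last paragraph then goes through with the angular derivative in place of a coordinate derivative, and no gluing of angular charts is needed.
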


 The theorem states that for almost every $r > 0$ the intersection $E \cap \partial B(x,r)$ consists basically of a finite union of arcs of the circle. Through the rest of the paper, we will often consider intersections of sets with balls; in such cases, we will always assume that Vol'pert's Theorem holds true.

\subsection{Wulff shape and initial results}

\begin{definition}[Wulff shape]
The Wulff shape associated with $\phi$ is
\[
W_\phi := \bigcap_{y\in \mathbb{S}^1} 
  \{x\in \R^2 : x\cdot y \le \phi(y)\}=\{\phi^* < 1 \}
\]
where $ \phi^*(z)=\sup\{z \cdot y : \phi(y)<1\} $.

It is the unique (up to translations) solution of the anisotropic version of the Euclidean isoperimetric problem
\[
\min \{ P_\phi(E) : |E| = m \}.
\] 
\end{definition}

\begin{lemma} \label{lemma:anisotrop_lens} 
    Let $\phi \in \mathcal{C}^2(\mathbb{R}^2 \setminus \{0\})$ be a regular anisotropy. Given a unit vector $\hat{n} \in \mathbb{S}^1 $ and a volume $m>0$, there exists a unique set $L_\phi\subset\R^2$ (up to translations) with the following properties:
    \begin{enumerate}
        \item $L_\phi$ is bounded by two arcs of a scaled Wulff shape $\lambda W_\phi$ for some $\lambda>0$.
        \item $\partial L_\phi$ contains exactly two vertices.
        \item At both junctions, the two outward normals from $L_\phi$, denoted by $\nu_1, \nu_2$, satisfy Young's law (the standard regularity condition at triple points \cite[Theorem 3.6]{Franceschi2023}) with respect to the exterior prescribed outer normal $\hat{n}$,
        \[
        \nabla \phi(\hat{n}) + \nabla \phi(\nu_1) + \nabla \phi(\nu_2) = 0.
        \]
        \item The enclosed area of $L_\phi$ equals $m$.
    \end{enumerate}

    We call this set the \emph{anisotropic lens} with prescribed outer normal $\hat{n}$ and volume $m$.

    \begin{proof}

The proof proceeds in several steps. First, we establish a one-to-one correspondence between $\mathbb{S}^1$ and the boundary of the Wulff shape $\partial W_\phi$. Then, for a prescribed normal $\hat n$, the anisotropic Young law uniquely determines the corresponding points on $\partial W_\phi$.  Hence, the two points where the Wulff arcs must meet are determined. These two points uniquely fix the shape of the lens, and the enclosed area $m$ then determines the scaling factor $\lambda$. 
\medskip

    \textit{Step 1: Diffeomorphism between $\mathbb{S}^1$ and $\partial W_\phi$.} 
We define the map
\[
F : \mathbb{S}^1 \longrightarrow \mathbb{R}^2,
\qquad 
F(\nu):= \nabla\phi(\nu).
\]
Since $\phi \in \mathcal{C}^2(\mathbb{R}^2 \setminus \{0\})$, its gradient is a $\mathcal{C}^1(\mathbb{R}^2 \setminus \{0\}) $ function, thus $F \in \mathcal{C}^1(\mathbb{S}^1)$. 
Since $\phi$ is $\mathcal{C}^2$ and strictly convex, its tangential Hessian is strictly positive, which implies that the derivative of $F$ in tangential directions never vanishes. Therefore $F$ is a $\mathcal{C}^1$-immersion. Strict convexity also implies that $\nabla \phi$ is injective on $\mathbb{S}^1$. Since  $\mathbb{S}^1$ is compact, $F$ is a homeomorphism onto its image, and combining this with the immersion property we conclude that $F$ is a $\mathcal{C}^1$-diffeomorphism between  $\mathbb{S}^1$ and $F(\mathbb{S}^1)$.

We now prove that $F(\mathbb S^1)=\partial W_\phi$. By Euler's theorem for
one-homogeneous functions,
\[
\phi(\nu)=\nabla\phi(\nu)\cdot\nu \qquad \text{for all } \nu\in\mathbb S^1.
\]
Combined with the convexity of $\phi$, this yields
\[
\nabla\phi(\nu)\cdot\eta \le \phi(\eta)
\qquad \text{for all } \eta\in\mathbb S^1.
\]
Hence, for any $x\in F(\mathbb S^1)$, there exists $\nu\in\mathbb S^1$ such that
$x=\nabla\phi(\nu)$, and the inequality above shows that $x\in W_\phi$.
Moreover, equality holds for $\eta=\nu$, so $x\in\partial W_\phi$. This proves
that $F(\mathbb S^1)\subset\partial W_\phi$.

 Conversely, if $x \in \partial W_\phi$, the functional $g(y)=x \cdot y-\phi(y)$  achieves its maximum at some $\nu \in \mathbb{S}^1$. Strict convexity of $\phi$ implies that this maximizer is unique, and the optimality condition $\nabla g(\nu)=0$ yields $x=\nabla\phi(\nu)$, thus  $x \in F(\mathbb{S}^1) $, proving $\partial W_\phi \subset F(\mathbb{S}^1)$. We conclude that
$F(\mathbb S^1)=\partial W_\phi$.

\medskip

\textit{Step 2: The points of the Wulff shape satisfying Young's law and its scaling are determined.}

Let $\hat n\in\mathbb S^1$ be the prescribed outer normal, and set
$A:=\nabla\phi(\hat n)\in\partial W_\phi$. We claim that there exists a unique unordered pair of points $\{B, C\}\subset\partial W_\phi\setminus\{A\}$ such that the three corresponding normals at each point satisfy the anisotropic Young's law
\[
\nabla\phi(\hat{n})+\nabla\phi(\nu _1)+\nabla\phi(\nu _2)=0,
\]
where $B=\nabla\phi(\nu _1)$ and $C=\nabla\phi(\nu _2)$ for specific $\nu_1, \nu_2 \in \mathbb{S}^1$.
The existence and uniqueness of such a pair  $\{B,C\}$ is proved in~\cite[Lemma 3.4]{Franceschi2023}. Specifically, they prove that for a symmetric anisotropic density $\phi$, given any fixed point $A\in \partial W_\phi$ there exists a unique triple of points $A, B, C \in \partial W_\phi$ such that the normal vectors at each point, namely $\hat{n}, \nu_1, \nu_2$ (using the defined diffeomorphism, we have the relation $B=\nabla\phi (\nu_1), \quad C=\nabla\phi (\nu_2) $), satisfy Young's law.
      
We now consider the arc of $\partial W_\phi$ connecting $B$ and $C$, namely $\Gamma$. Since the Wulff shape is centrally symmetric with respect to $O$ and strictly convex, the arc connecting
$B$ and $C$ is mapped to a second congruent arc $\Gamma^*$, connecting the reflected points $B'$ and $C'$. 
Let $\vec{d}$ be the vector connecting the endpoints. We translate the Wulff arc $\Gamma^*$ by $\vec{d}$ until the endpoints of both arcs coincide. The interface normals  $\nu_1$, $\nu_2$ at these points, together with the exterior prescribed normal $\hat{n}$, satisfy the anisotropic Young's law.

This configuration is uniquely determined up to translations, and we call the enclosed region the anisotropic lens.

Finally, scaling the Wulff shape by a factor $\lambda>0$ scales the lens
accordingly, and the enclosed area depends continuously and strictly
monotonically on $\lambda$. Therefore, there exists a unique $\lambda>0$ such
that the enclosed area equals the prescribed value $m$. This completes the
proof.
\end{proof}

\end{lemma}

    \begin{figure}[htbp]
    \centering
    \includegraphics[width=0.55\textwidth]{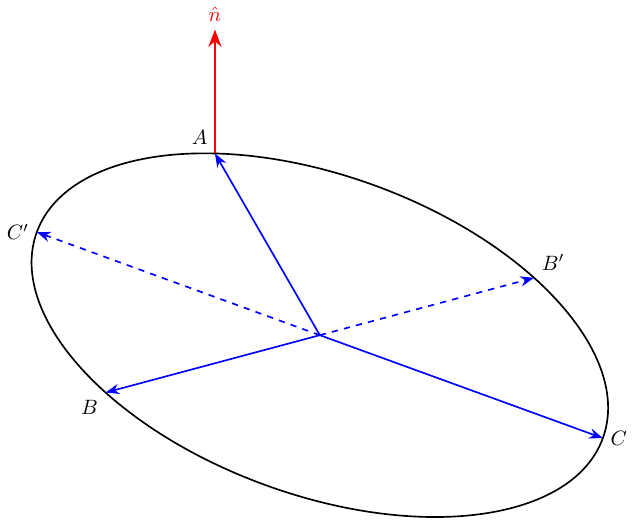} 
    \caption{Construction of the Wulff anisotropic lens.}
    \label{fig:wulff_lens}
\end{figure}

  \begin{lemma} \label{lemma:anisotrop_triple_lens} 
    Let $\phi \in \mathcal{C}^2(\mathbb{R}^2 \setminus \{0\})$ be a regular anisotropy. Given a unit vector $\hat{n} \in \mathbb{S}^1 $ and a volume $m>0$, there exists a unique set $T_\phi\subset\R^2$ (up to translations) with the following properties:
    \begin{enumerate}
        \item $T_\phi$ is bounded by three arcs of a scaled Wulff shape $\lambda W_\phi$ for some $\lambda>0$.
        \item $\partial T_\phi$ contains exactly three vertices.
        \item At the junctions, the two outward normals from $T_\phi$ satisfy Young's law - the standard regularity condition at triple points \cite[Theorem 3.6]{Franceschi2023} - with respect to another suitable direction. At one vertex, the law is satisfied with respect to the prescribed exterior normal $\hat{n}$,
        \[
        \nabla \phi(\hat{n}) + \nabla \phi(\nu_1) + \nabla \phi(\nu_2) = 0
        \]
        and at the remaining vertices, it is satisfied with respect to the normals $\nu_1$ and $\nu_2$, respectively.
    \end{enumerate}

    We call this set the \emph{anisotropic Reuleaux triangle} with prescribed outer normal $\hat{n}$ and volume $m$.

\begin{proof}
The proof is analogous to the previous one, with minor changes. 

Once the diffeomorphism is constructed, the points $\{A, B, C\}\subset \partial W_\phi$ are well-defined with the corresponding normals $\{\hat{n}, \nu_1, \nu_2\}$. We consider $A'$, $B'$ and $C'$ to be the reflections of $A,B,C$ with respect to
the center of symmetry of $W_\phi$. We then consider three arcs on $\partial W_\phi$: the arc connecting $A$ to $C'$ (namely $\Gamma_1$), the arc connecting $C$ to $B'$ ($\Gamma_2$), and the arc connecting $B$ to $A'$ ($\Gamma_3$).
We translate the arcs until they meet end-to-end, forming a closed curve that encloses a simply connected region. By construction, at the vertex formed by the junction of $\Gamma_1$ and $\Gamma_3$, Young's law is satisfied with respect to the prescribed normal $\hat{n}$. At the other vertices, the law is satisfied with respect to $\nu_1$ for the junction of $\Gamma_2$ and $\Gamma_3$; and with respect to $\nu_2$ for the vertex resulting from $\Gamma_1$ and $\Gamma_2$.

This configuration is uniquely determined up to translations, and we call the enclosed region the anisotropic Reuleaux triangle.

Finally, scaling the Wulff shape by a factor $\lambda>0$ scales the set
accordingly, and the enclosed area depends continuously and strictly
monotonically on $\lambda$. Therefore, there exists a unique $\lambda>0$ such
that the enclosed area equals the prescribed value $m$. This completes the
proof. 
\end{proof}

\end{lemma}

  \begin{figure}[htbp]
    \centering
    \includegraphics[width=0.55\textwidth]{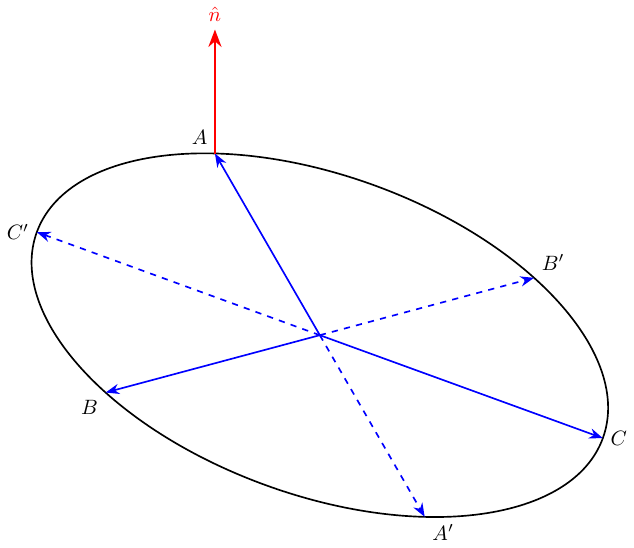} 
    \caption{Construction of the anisotropic Reuleaux triangle.}
    \label{fig:wulff_lens}
\end{figure}

\section{Regularity theory and local estimates} \label{section:regularity_th}
\begin{note}
    Most of these results hold in $\mathbb{R}^d$; however, for simplicity, we will state them for $\mathbb{R}^2$.
\end{note}

\begin{lemma}\cite[Corollary~5.8]{Novaga2023}\label{lemma:finitevolume_bounded} 
If $\mathcal{E}$ is a locally isoperimetric partition, then each region with finite measure is bounded. Hence, the subcluster of the chambers with finite volume is bounded and has finite perimeter.
    
\end{lemma}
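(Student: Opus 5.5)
The plan is to follow the strategy of Novaga--Paolini--Tortorelli and prove first that each finite-volume region $G_j$ is bounded; the statement about the subcluster then follows immediately. The tools are the standard ones: a density estimate obtained by comparing the partition with local modifications, together with a volume-fixing (almost-minimality) argument. Since the lemma concerns the isotropic perimeter as in \cite{Novaga2023}, we may quote that framework directly. Here is how we would reconstruct the argument.

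\emph{Step 1: volume-fixing variations.} First, we take two disjoint small balls $B_1$ and $B_2$ centred at points of $\partial^*G_j$ in the interior of a bounded region. Using diffeomorphisms supported in these balls, we can change $|G_j|$ by any small amount $\delta$ (of either sign) at perimeter cost at most $C|\delta|$. This is the classical Almgren lemma. Fix a radius $R_0$ such that $B_1,B_2\subset B_{R_0}$.

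\emph{Step 2: decay of the volume outside large balls.} Next, set $u(r)=|G_j\setminus B_r|$ for $r>R_0$. For a.e. $r$, Vol'pert's Theorem~\ref{theorem:volpert} applies, so $u'(r)=-\Haus^1(G_j\cap\partial B_r)$. We build the competitor that assigns $G_j\setminus B_r$ to a neighbouring region; if $G_j$ touches several regions, we pick one adjacent along $\partial B_r$. We then restore the volume $u(r)$ using Step 1. Comparing perimeters in a compact set gives
\[
P(G_j;\R^2\setminus \overline{B_r}) \le \Haus^1(G_j\cap\partial B_r) + C\,u(r).
\]
For $r$ large, $u(r)$ is small, and the isoperimetric inequality gives $c\,u(r)^{1/2}\le P(G_j\setminus B_r)\le 2\Haus^1(G_j\cap\partial B_r)+C u(r)$. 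The term $Cu(r)$ is absorbed since $u\le u^{1/2}$ when $u$ is small. This yields the differential inequality $-u'(r)\ge c' u(r)^{1/2}$, so $\frac{d}{dr}u^{1/2}\le -c'/2$. Hence $u$ vanishes at some finite $r$, which means $G_j$ is bounded up to null sets.

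\emph{Step 3: finiteness of perimeter.} Once all $G_j\subset B_{R_1}$, comparing with a partition in which $G_j$ is replaced by a ball of the same volume placed inside a single region gives $P(G_j)<\infty$. Alternatively, local finiteness of the perimeter in $B_{R_1+1}$ gives the same conclusion.

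The main obstacle is Step 2: the competitor must have compact support of the modification and must keep the infinite regions intact outside a compact set. This holds because the modification lies inside $B_{r'}$ for some $r'$, by the finiteness of $u$ and the local finiteness of the perimeter. We also need the volume-fixing balls to lie inside $B_r$, with constants uniform in $r$, which is why $R_0$ is fixed at the outset.
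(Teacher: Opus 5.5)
The paper gives no argument of its own here; it imports the statement from \cite[Corollary~5.8]{Novaga2023}, where it is obtained within their regularity theory for locally isoperimetric partitions. Your route (volume-fixing, the isoperimetric inequality, and an ODE for the tail volume $u(r)$, as in the classical boundedness proof for minimizing clusters) is a legitimate alternative, but Step~2 has two genuine gaps.

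\emph{First gap: admissibility.} Local minimality only tests competitors that agree with $\mathcal{E}$ outside a compact set. Reassigning all of $G_j\setminus B_r$ gives such a competitor only if $G_j$ is already bounded, which is exactly the claim. Finiteness of $u(r)$ and local finiteness of the perimeter do not place $G_j\setminus B_r$ inside any $B_{r'}$, so the justification in your last paragraph is circular. The standard repair is to modify only $G_j\cap(B_s\setminus B_r)$. This adds $\Haus^1(G_j\cap\partial B_s)$ to the right-hand side. Since $\int_0^\infty\Haus^1(G_j\cap\partial B_s)\,ds=|G_j|<\infty$, you can let $s\to\infty$ along a sequence on which that term tends to $0$.

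\emph{Second gap: your competitor does not give the perimeter inequality you state.} If $G_j\setminus B_r$ is handed to a single region $E_k$, only the interface $\partial^*G_j\cap\partial^*E_k$ outside $\overline{B_r}$ disappears. Each other piece $\partial^*G_j\cap\partial^*E_l$, $l\neq k$, becomes an $E_k|E_l$ interface with the same length and normal, hence the same cost. Comparison therefore yields only $\Haus^1(\partial^*G_j\cap\partial^*E_k\setminus\overline{B_r})\le\Haus^1(G_j\cap\partial B_r)+Cu(r)$. That does not bound $P(G_j;\R^2\setminus\overline{B_r})$, which is what the isoperimetric inequality needs. For instance, a thin tentacle of $G_j$ separating two infinite chambers loses only about half of its boundary when given to one of them. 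Picking $E_k$ by adjacency along $\partial B_r$ does not address this.

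Instead, choose $k$ so that $E_k$ carries the largest share of $\partial^*G_j$ in the annulus. The removed part is then at least $P(G_j;B_s\setminus\overline{B_r})/(S-1)$, and the differential inequality survives with worse constants. If that $E_k$ has finite volume, its volume changes as well. So Step~1 must be the multi-chamber volume-fixing lemma, restoring all finite volumes simultaneously at cost $C\sum_i|\delta_i|$, not a one-region adjustment of $|G_j|$. Alternatively, run the argument for $G=\bigcup_j G_j$ and give $G\setminus B_r$ to the infinite chamber sharing the most boundary with $G$.

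With these repairs the argument closes, and it is insensitive to anisotropy since reassigned interfaces keep their normals. Step~3 via local finiteness of the perimeter is fine. By comparison, the density-estimate route sidesteps both issues. It only uses competitors supported in small balls away from the fixed volume-fixing balls. An unbounded $G_j$ would then yield infinitely many disjoint balls $B_{r_0}(x_k)$, $x_k\in\partial G_j$, with $|G_j\cap B_{r_0}(x_k)|\ge c\,r_0^2$, contradicting $|G_j|<\infty$.
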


\begin{theorem}\cite[Theorem~A]{Franceschi2023}\label{theorem:steiner} (Steiner regularity for minimal clusters). Let $\phi : \mathbb{S}^1 \to (0,+\infty)$  be of class $C^2$ and uniformly convex. Let $\mathcal{E}$ be a minimal cluster. Then $\mathcal{E}$ satisfies the Steiner property, i.e., $\partial \mathcal{E}$ is a locally finite union of $C^1$ arcs, and each junction point is the endpoint of exactly three different arcs, arriving with three different tangent vectors. The arcs of $\partial \mathcal{E}$ are actually $C^{1, \frac{1}{2}}$.

\end{theorem}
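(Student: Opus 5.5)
We prove Theorem~\ref{theorem:steiner} (Steiner regularity for minimal clusters) for an anisotropy $\phi$ of class $C^2$ and uniformly convex, following the scheme of \cite{Franceschi2023}. The argument has three stages: almost-minimality, local structure of $\partial\mathcal{E}$ near each point, and the classification of blow-ups at junctions.

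\textbf{Almost-minimality and density estimates.} We first use volume-fixing variations. Choose finitely many small balls, away from any given point, in which the chambers have interior structure. Inside these balls, small deformations change the volumes of the chambers by prescribed small amounts. The cost is linear in the volume change, up to the constants $\phi_{\min},\phi_{\max}$. Hence $\mathcal{E}$ is a $(\Lambda,r_0)$-almost minimizer: for any competitor $\mathcal{F}$ that agrees with $\mathcal{E}$ outside $B_r(x)$, with $r<r_0$, and without any volume constraint,
\[
P_\phi(\mathcal{E};B_r(x)) \le P_\phi(\mathcal{F};B_r(x)) + \Lambda r^2 .
\]
Comparing with the competitor obtained by filling a ball with one chamber gives upper perimeter density bounds $P(\mathcal{E};B_r(x))\le C r$. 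The relative isoperimetric inequality then gives lower volume density bounds for every chamber whose boundary contains $x$. It follows that $\partial\mathcal{E}$ is closed, and that only finitely many chambers meet any compact set.

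\textbf{Local structure.} Near a point where only two chambers meet, the interface is an almost-minimizer of the single-set anisotropic perimeter. Because $\phi$ is $C^2$ and uniformly convex, the anisotropic $\varepsilon$-regularity theory (Almgren, Schoen--Simon, Bombieri) applies. In dimension $2$ it shows the interface is a $C^{1,1/2}$ graph, the exponent coming from the $\Lambda r^2$ error term. At any other point $x\in\partial\mathcal{E}$ we blow up. By compactness of almost-minimizers, the rescalings $(\mathcal{E}-x)/r$ converge as $r\to0$ to a conical, locally $\phi$-minimizing partition, with no volume constraint, whose boundary is a finite union of half-lines from the origin. Monotonicity is not available for anisotropic energies. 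We replace it by this compactness together with the density bounds, which bound the number of half-lines.

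\textbf{Classification of minimizing cones (main obstacle).} We must show that a $\phi$-minimizing cone in the plane is either a line or exactly three half-lines with distinct directions satisfying Young's law $\sum_i \nabla\phi(\nu_i)=0$. We argue as follows.
\begin{itemize}[label={}]
\item \emph{Stationarity.} A first variation moving the vertex gives $\sum_i \nabla\phi(\nu_i)^{\perp}$-balance, that is, Young's law.
\item \emph{At most three half-lines.} Suppose there are at least four half-lines, or two adjacent half-lines at a small angle. Replace the vertex region by a short segment carrying a triple-junction splitting. Uniform convexity, through the strict triangle inequality $\phi(a+b)<\phi(a)+\phi(b)$ for non-parallel $a,b$, makes this replacement strictly cheaper, contradicting minimality.
\item \emph{Lines.} With two half-lines, minimality forces them to be opposite, so the cone is a line.
\item \emph{Three half-lines.} With three half-lines, Young's law forces the directions to be pairwise distinct.
\end{itemize}
This step is where uniform convexity is essential, so we expect it to be the hardest.

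\textbf{Passing back to $\mathcal{E}$.} The last task is to show the blow-up is unique and that $\mathcal{E}$ is a $C^1$ triod near $x$. We use an epsilon-regularity (perturbation) argument near the triple-junction cone, in the spirit of Taylor's argument. The planar setting simplifies this: the arcs of $\partial\mathcal{E}$ near $x$ are graphs over the cone's half-lines, and their tangents converge to the three limit directions. Local finiteness of the set of junctions follows from the density bounds combined with the isolation of triple points.
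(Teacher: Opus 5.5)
The paper does not prove this statement; it is imported from Franceschi--Pratelli--Stefani \cite[Theorem~A]{Franceschi2023}, so your outline has to stand on its own. Your first stages are standard and fine: almost-minimality via volume-fixing variations, the density estimates, and two-phase $\varepsilon$-regularity.

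\textbf{The gap is at the blow-up.} Compactness of almost-minimizers only gives that subsequential limits of $(\mathcal{E}-x)/r$ are locally $\phi$-minimizing partitions of $\R^2$, with no volume constraint and linear perimeter growth. Nothing makes them invariant under dilations. In the isotropic case, conicity of tangent objects is exactly what the monotonicity of $\rho^{-1}P(\mathcal{E};B_\rho(x))$ supplies. You correctly note that monotonicity is unavailable, but compactness and density bounds do not replace it: they bound the number of half-lines only once you already know the limit is a cone. So your classification applies to a class of objects your tangent partitions are not known to belong to.

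What you would actually need is a classification of all entire $\phi$-minimizing partitions of the plane. A priori, the set where three or more phases meet in such a limit is only some closed $\Haus^1$-null set, so this is essentially the theorem itself applied to the limit partition. Some genuinely planar input is required. For instance, you could use the coarea formula to find radii $s$ for which $\partial B_s$ meets the boundary in a bounded number of points, and then analyse the minimizer inside $B_s$ as a finite network. The proposal does not supply any such argument.

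Two further steps are not correct as written.

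First, the reason you give for excluding four or more half-lines does not distinguish four from three. The strict triangle inequality holds for every pair of adjacent non-parallel rays, including those of a Young triod, where splitting is not profitable. Splitting two adjacent half-lines with normals $\nu_i,\nu_{i+1}$ (the ray directions rotated by $90^\circ$) lowers the energy to first order exactly when $\phi^*(\nabla\phi(\nu_i)+\nabla\phi(\nu_{i+1}))>1$. You must show this holds for some adjacent pair whenever there are at least four half-lines with $\sum_i\nabla\phi(\nu_i)=0$. That uses the structure of Young triples on $\partial W_\phi$ (an inscribed affine-regular hexagon through every boundary point, cf.\ \cite[Lemma~3.4]{Franceschi2023}), not strict convexity alone.

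Second, appealing to Taylor's argument for the last step is unsupported, since that result is isotropic and rests on monotonicity. In the plane it is also unnecessary. Suppose every blow-up at $x$ is a Young triod. Then at all small scales the cluster is $L^1$-close to some triod centred at $x$, and the lower volume density of each chamber forces every point of $B_r(x)\setminus\{x\}$ to be a two-phase point. The three resulting arcs have constant anisotropic curvature (Euler--Lagrange) and finite length, hence finite total turning, so their tangents converge at $x$. This gives uniqueness of the blow-up, the three distinct tangents, and local finiteness of the junctions directly---but only once the first gap is closed.
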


\begin{lemma}\label{lemma:upper_estim_perim} (Upper estimate of perimeter). Let $\phi$ be a general anisotropic density, and $\mathcal{E}=(E_1,\dots, E_S)$ be a locally isoperimetric $S$-partition in $\R^2$. Then, for every $\textbf{x}\in\R^2$ and any $R>0$, one has
\[
P_\phi(\mathcal{E}, B_R(\textbf{x})) \leq C_0 \cdot R^{1}
\]
with $C_0=C_0(\phi, 2, S)$ a constant not depending on $\mathcal{E}$, $\textbf{x}$ or $R$.

\begin{proof}
The argument is exactly the same as in Lemma~2.5 of \cite{Novaga2023}.
Up to translations assume $x=0$. For $0<\rho<R$ we apply the slicing
construction inside $B_\rho$, obtaining a partition $\mathcal F$ satisfying
(1)--(3). By local minimality, $P_\phi(\mathcal E,B_R)\le P_\phi(\mathcal F,B_R)$, thus
\[
2P_\phi(\mathcal E,B_R)
\le
2P_\phi(\mathcal E,B_R\setminus\overline{B}_\rho) + 2 \phi_{max} \mathcal{H}^{1}(\partial B_\rho)
+2\sum_{k=1}^S P_\phi(F_k,B_\rho)
\]
\[
\leq 2P_\phi(\mathcal E,B_R\setminus\overline{B}_\rho) + C_1\rho^{1}.
\]

Letting $\rho\to R^{-}$ and using
$P_\phi(\mathcal E,B_R\setminus\overline{B}_\rho)\to 0$
yields
\[
P_\phi(\mathcal E,B_R)\le C_0 R.
\]
Here, $C_0$ depends on $\phi$, $2$ and $S$.
\end{proof}

\end{lemma}

\begin{theorem}[Anisotropic closure for locally isoperimetric flat partitions]\label{theorem:closure} 
 Let $\phi : \mathbb{S}^1 \to (0,+\infty)$  be of class $C^2$ and uniformly convex. Let $\mathcal{E}^R = (E_1^R, \dots, E_S^R)$ be a sequence of locally isoperimetric $S$-partitions of $B_R \subset \mathbb{R}^2$ as $R \to +\infty$. 
Suppose there exists a partition $\mathcal{E}^\infty = (E_1^\infty, \dots, E_S^\infty)$ of $\mathbb{R}^2$ such that for all $j=1, \dots, S$, we have $E_j^R \to E_j^\infty$ in $L^1_{\text{loc}}(\mathbb{R}^2)$ as $R \to +\infty$.

If $\mathcal{E}^\infty$ is eventually flat, then $\mathcal{E}^\infty$ is a locally isoperimetric partition of $\R^2$. 

\begin{proof}
The proof follows by adapting the compactness-semicontinuity argument in \cite[Theorems~2.13 and 2.17]{Novaga2023} to the anisotropic setting. The key steps are the lower semicontinuity of the anisotropic perimeter and local density estimates.

\end{proof}
\end{theorem}

\begin{theorem} \label{theorem:boundary_cond}
 Let $\phi : \mathbb{S}^1 \to (0,+\infty)$  be of class $C^2$ and uniformly convex. Let $\mathcal{E}=(E_1,\dots, E_S)$ be a locally isoperimetric $S$-partition of the plane $\R^2$. Then $\partial E$ is connected. If two areas are infinite then $\partial\mathcal{E}$ coincides with two parallel rays outside a sufficiently large ball. If three areas are infinite then $\partial\mathcal{E}$ coincides with three half-lines whose prolongations define angles satisfying Young's law.
\begin{proof}
The proof is similar to the one of Theorem 4.2 in \cite{Novaga2023}.

By Lemma~\ref{lemma:finitevolume_bounded}, we get that regions with finite area are bounded. By Theorem~\ref{theorem:steiner} we know that the boundary of the cluster is a locally finite planar graph. Some arcs of the graph are unbounded, and in that case we assume each arc has one or two vertices of order $1$ at infinity. All other vertices, by the Steiner regularity theorem (Theorem~\ref{theorem:steiner}), have order three. We can find a radius $R>0$ large enough such that all bounded regions are compactly contained in $B_R$. Outside this ball, arcs of the graph have zero curvature, as there is no volume constraint on the infinite regions. So we characterize the graph outside $B_R$ by entire straight lines (with two end-points at infinity), line segments (with two endpoints in $\R^d$), and half-lines (with one end-point at $\R^d$ and one end-point at infinity).

We first claim that every bounded closed loop of $\partial \mathcal{E}$ is contained in $B_R$. Assume that a closed loop $\alpha$ contains an arc outside $B_R$; this would mean it is separating two infinite-area regions. Removing $\alpha$ and reassigning the enclosed area of one infinite component to another infinite region strictly decreases the perimeter, while preserving the volume constraints. This contradicts the minimality of $\mathcal{E}$, so we conclude that all closed loops are contained in $B_R$.

Moreover, we claim the graph $\partial \mathcal{E}$ has finitely many vertices. By Lemma~\ref{lemma:upper_estim_perim}, the number of vertices at infinity is bounded. Suppose we have an infinite number of vertices of order three. Since the graph is locally finite, these cannot accumulate in a compact region; so there must be a sequence of vertices diverging to infinity. As previously established, all closed loops of $\partial \mathcal{E}$ are contained in $B_R$. This number of loops is locally finite. By removing a finite number of arcs within $B_R$, we can break all cycles to obtain a cycle-free subgraph $\Gamma$, which consists of a finite number of trees intersecting $B_R$. One of the trees must be infinite, as $\Gamma$ contains a finite number of trees but an infinite number of vertices. So these vertices of order three must go to infinity. This guarantees the existence of infinitely many disjoint paths going to infinity, contributing to the linear perimeter and contradicting our density estimate of Lemma~\ref{lemma:upper_estim_perim}.

Since the graph $\partial \mathcal{E}$ is finite, by enlarging $R$ we can assume $B_R$ contains all vertices of order three, so that $\mathcal{E}\setminus B_R$ is composed of a finite number of disjoint half-lines, say $n$, emanating from $B_R$ and going to infinity. These half-lines have different directions towards infinity, as if we had two parallel half-lines we could merge them into a single long segment reducing the total perimeter. 

Recall that the interfaces separating the infinite-area regions have no volume constraints, meaning their associated Lagrange multipliers vanish. As established in \cite[Proposition 2.4]{Morgan1998}, the zero pressure difference implies that these interfaces must be straight half-lines outside $B_R$.

Now, we consider a \emph{blow-down} of $\mathcal{E}$. By letting $\mathcal{E}^k=\frac{\mathcal{E}}{k}$ be a rescaled partition of $\mathcal{E}$ we notice that $\mathcal{E}^k$ converges in $L_{\text{loc}}^1$ as $k\rightarrow \infty$ to a limit partition $\mathcal{E}^{\infty}$ of $\R^2$ delimited uniquely by $n$ half-lines, with a common vertex at the origin, each line parallel to the $n$-half-lines of $\mathcal{E}\setminus B_R$. Using \cite[Theorem 2.13]{Novaga2023}, adapted to the anisotropic case (the proof is the same using anisotropic density estimates), we deduce that $\mathcal{E}^\infty$ is a locally $J$-isoperimetric partition. The rescaled bounded regions, $E_j^k=\frac{E_j}{k}$, vanish in measure since their original areas $|E_j|$ are finite. So we can ignore the regions $E_j^\infty$ from $\mathcal{E}^{\infty}$, obtaining a locally isoperimetric partition of $\R^2$ composed of $n$ angles with a common vertex at the origin. By Theorem~\ref{theorem:steiner}, we know this junction must be a valid minimal cone satisfying Young's law. 

In the case $n=0$
 the graph has no vertices at infinity and the partition $\mathcal{E}$ is a bounded cluster. 
 In the case $n=3$ we have that $\partial \mathcal{E} \setminus \overline{B_R}$ is made of three half-lines going to infinity with angles satisfying Young's law. In the case of $n=2$ we have that $\partial \mathcal{E} \setminus \overline{B_R}$ is made of two half-lines going to infinity at opposite directions.

\end{proof}

\end{theorem}

\section{The minimization problem in $B_R$} \label{section:local_pb}

The asymptotic behavior of anisotropic perimeter minimizing clusters with two and three infinite areas is characterized by Theorem~\ref{theorem:boundary_cond}: outside a sufficiently large ball $B_R$, if two chambers have infinite area the cluster consists of two unbounded chambers separated by two parallel rays; if three chambers have infinite area the cluster consists of three unbounded chambers separated by three half lines. Therefore, our analysis focuses on the existence and uniqueness of the minimizer inside $B_R$ with prescribed boundary conditions.

\subsection{The $(1,2)$-cluster}

We consider a partition 
\[
\mathcal{E} = (E_1, E_2, E_3)
\]
of $\R^2$ satisfying:
\[
|E_1| = m > 0,\qquad |E_2| = |E_3| = \infty,
\]
for a fixed $m, R\in \R^+$, $R$ sufficiently large, in $\R^2$.

We want to minimize the anisotropic perimeter

\[
P(\mathcal{E}; B_R)
= \frac12 \sum_{i=1}^3 P(E_i; B_R),    \qquad
 B_R \subset \R^2.
\]

\begin{definition}[Exterior Boundary Condition]\label{def:exterior_condition}
We say that a partition $\mathcal{E}=(E_1,E_2,E_3)$ of $\mathbb{R}^2$ satisfies the
\emph{prescribed exterior configuration} outside the ball $B_R$ if:
\begin{enumerate}
    \item The finite phase is contained in the ball, i.e.,\ $E_1 \subset B_R$.
    \item There exists a fixed oriented line $L \subset \mathbb{R}^2$, with given normal $\hat{n}$, such that,
    outside the ball $B_R$, the phases $E_2$ and $E_3$ coincide with the two open
    half-planes determined by $L$. More precisely,
    \[
        \mathcal{E}\setminus B_R = \mathcal{E}_0 \setminus B_R,
    \]
    where $\mathcal{E}_0=(\emptyset,H^+,H^-)$ and $H^\pm$ are the two half-planes
    separated by $L$.
    \item We define $ \{p_1, p_2\}=L \cap \partial B_R$.
\end{enumerate}
\end{definition}

\begin{remark}[Reduction to a flat exterior configuration]\label{remark:flat_ext_cond}
The assumption that the exterior phases $E_2$ and $E_3$ are separated by a single
straight line $L$ does not entail any loss of generality.
The variational problem is localized inside $B_R$: admissible competitors
are required to coincide with the prescribed configuration outside the ball, and
all perimeter variations are compactly supported. 

If the minimizing configuration inside $B_R$ does not align with the exterior
direction, one may measure the resulting global misalignment by an angle
$\alpha(R)$ between the prescribed line $L$ and a reference direction
associated with the configuration inside the ball. 

It is then natural to expect that $\alpha(R)\to 0$ as $R\to\infty$, since a
non-vanishing misalignment would produce an excess anisotropic
perimeter at large scales, contradicting the minimality of the configuration. 

One could also measure the gap between the triple junctions of the lens construction in Lemma~\ref{lemma:anisotrop_lens} (determined by $\phi$ and $m$), and work with an exterior boundary condition adapted to this gap, namely fixing two half-lines separated by this distance. In that setting, the tilt parameter is avoided.
\end{remark}


\begin{proposition} [Local minimality of the Anisotropic Lens in $B_R$]\label{prop:local_lens}
Let $\phi$ be a regular anisotropy on $\mathbb{R}^2$. 
Given a radius $R>0$ sufficiently large, a direction $\hat{n}$, and a mass $m>0$, let $L_\phi$ be the associated \emph{anisotropic lens} from Lemma~\ref{lemma:anisotrop_lens}. We define the associated \emph{standard lens cluster} by joining the vertices of $L_\phi$ with two half-lines with prescribed normal $\hat{n}$ ending at the boundary of $B_R$. Then, this configuration is the unique minimizer of the anisotropic perimeter in $B_R$ among all clusters $\mathcal{E}$ satisfying the prescribed boundary condition (Definition~\ref{def:exterior_condition}) and the volume constraint. 
\end{proposition}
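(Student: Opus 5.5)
Existence comes first, then a structural description of any minimizer, and finally a comparison that singles out the lens. For existence, I take a minimizing sequence of partitions that agree with $\mathcal{E}_0$ outside $B_R$ and satisfy $|E_1|=m$. Since $\phi_{\min}>0$, the perimeters in a slightly larger ball are uniformly bounded, so BV compactness gives $L^1$ convergence. Lower semicontinuity of $P_\phi$ passes the energy to the limit, and the volume constraint survives because everything lives in the bounded set $\overline{B_R}$.

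For the structure, I apply the Steiner regularity of Theorem~\ref{theorem:steiner} in the interior of $B_R$. The boundary is a finite graph of $C^{1,1/2}$ arcs with triple junctions that satisfy Young's law. Each interface has constant anisotropic curvature, given by the first variation with a Lagrange multiplier. Interfaces between $E_2$ and $E_3$ carry no multiplier, so they are straight segments. Interfaces of $E_1$ with $E_2$ or $E_3$ are arcs of a common scaled Wulff shape $\lambda\partial W_\phi$, with the same $\lambda$ on both sides by the symmetry $\phi(\nu)=\phi(-\nu)$. Next I rule out extra topology with the loop-removal argument from the proof of Theorem~\ref{theorem:boundary_cond}. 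A component of $E_2$ or $E_3$ not touching $\partial B_R$ can be reassigned to a neighbouring infinite phase, which strictly lowers the perimeter. If $E_1$ has several components, I translate them together, as in the Wulff-shape argument, until they merge, which again strictly lowers the perimeter. The conclusion is that $E_1$ is connected and simply connected, touches both $E_2$ and $E_3$, and has exactly two triple junctions. From each junction a single straight $E_2$/$E_3$ segment runs to $\partial B_R$, and these segments must land at $p_1,p_2$ with normal $\hat n$.

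The identification step uses Young's law at the two junctions with normal $\hat n$ on the segment. By the uniqueness of the pair $\{B,C\}$ from \cite[Lemma 3.4]{Franceschi2023} used in Lemma~\ref{lemma:anisotrop_lens}, the normals $\nu_1,\nu_2$ of the two Wulff arcs at each junction are forced. Both boundary arcs of $E_1$ are then Wulff arcs with prescribed endpoint normals, so they are exactly the arcs $\Gamma,\Gamma^*$ of that construction. The area constraint fixes $\lambda$, so $E_1$ is a translate of $L_\phi$.

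The part I expect to be the main obstacle is that the two segments must be collinear with $L$ and attach to $p_1,p_2$. The arcs of $L_\phi$ leave the vertices with tangent direction $\hat n^\perp$, and the chord joining the vertices is parallel to $L$. So the segments are collinear only when the translate of $L_\phi$ sits with its chord on $L$. Otherwise the boundary data forces a tilt, the situation described in Remark~\ref{remark:flat_ext_cond}. My first attempt would be a direct calibration: compare the energy of any admissible configuration with that of the standard lens cluster. I would use the constant field $\nabla\phi(\hat n)$ on $E_2\cup E_3$ together with the Wulff-shape inequality $\nabla\phi(\nu)\cdot\eta\le\phi(\eta)$ from Lemma~\ref{lemma:anisotrop_lens}, and a divergence-theorem computation in the spirit of the isotropic paired calibration. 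Equality in the calibration inequality forces the normals to coincide with those of the standard lens cluster, and this yields uniqueness. If the calibration cannot be closed, I would fall back on the structural classification above and make $R$ large enough that only the aligned translate fits the boundary data.
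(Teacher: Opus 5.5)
Your existence argument and the structural reduction (straight $E(2,3)$ segments, Wulff arcs on $\partial E_1$, one component with two triple junctions) follow the paper's route. The proposal fails at the step you flag yourself, and no calibration or choice of $R$ can repair it: the claim it needs is false for a general regular anisotropy. You assume that the two $E(2,3)$ segments reach $p_1,p_2$ with normal $\hat n$, and that the chord of $L_\phi$ is parallel to $L$. Young's law gives neither.

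Write the component as $E_1=(\lambda W_\phi+c_1)\cap(\lambda W_\phi+c_2)$. At a vertex $q$, the Cahn--Hoffman vectors of the two arcs are $(q-c_1)/\lambda$ and $(q-c_2)/\lambda$ (Step~1 of Lemma~\ref{lemma:anisotrop_lens}). Young's law, with consistently oriented normals, then forces $\nabla\phi(\nu_q)=\pm(c_2-c_1)/\lambda$ at \emph{both} vertices. So the segments are parallel to each other with a common normal $\nu$, but nothing ties $\nu$ to $\hat n$.

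The vertices of the lens with normal $\nu$ are, up to translation, $\lambda B$ and $\lambda C$ with $\nabla\phi(\nu)+B+C=0$. The offset $(C-B)\cdot\nu$ vanishes for circles and ellipses, and when $\nu$ spans a symmetry axis of $W_\phi$, but not in general. For example, take $\phi(\cos\theta,\sin\theta)=1+\varepsilon\cos4\theta$ (a regular anisotropy for small $\varepsilon$) and $\hat n=(\cos\alpha,\sin\alpha)$. A first-order computation gives $|(C-B)\cdot\hat n|=6\sqrt3\,|\varepsilon\sin4\alpha|+O(\varepsilon^2)$.

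So the two rays of the standard lens cluster are parallel but not collinear, and that cluster does not satisfy Definition~\ref{def:exterior_condition}. With a single line $L$, the endpoint condition reads $(p_2-p_1)\cdot\nu=\lambda\,(C-B)\cdot\nu$, with $p_2-p_1$ parallel to $L$. Hence, whenever the offset at $\hat n$ is nonzero, every configuration of your type in $B_R$ is tilted, by an angle of order $1/R$ that is never zero.

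Neither of your two remedies can produce the untilted configuration. The constant field $\nabla\phi(\hat n)$ can at best calibrate flat interfaces of normal $\hat n$ and says nothing about the curved, volume-constrained boundary of $E_1$. Taking $R$ large only shrinks the tilt. Even when the offset vanishes, translating the lens along $L$ inside $B_R$ leaves the energy unchanged, so uniqueness can only hold modulo that translation.

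The paper does not establish alignment at finite $R$ either. Its auxiliary problem shows only that the minimizer is a single lens joined to $p_1,p_2$ by two parallel segments. Components are merged by translating along the common direction of those segments, which keeps the total length fixed; the contradiction comes from the irregular contact point, not from a strict decrease. The paper then identifies the lens through Lemma~\ref{lemma:anisotrop_lens} and claims uniqueness only up to translations along the rays. It records the misalignment as the tilt $\alpha(R)$ of Remark~\ref{remark:flat_ext_cond}, which is removed only as $R\to\infty$ in Theorem~\ref{th:ifonlyif_lens}, or by adapting the exterior condition to the gap.

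Your merging step should be done the same way. Translating components of $E_1$ in an arbitrary direction changes the lengths of the attached segments, so ``strictly lowers the perimeter'' is unjustified. Similarly, reassigning a component of $E_2$ or $E_3$ to a neighbouring infinite phase gains nothing when that component is a hole in $E_1$, so that case needs a separate argument.
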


The proof of Proposition~\ref{prop:local_lens} is divided into two parts. First, we characterize the geometric properties of any minimizer; then, we establish its uniqueness. 

\subsection*{Existence in $B_R$}

Fix $R>0$. We consider the anisotropic perimeter minimization problem in $B_R$ with prescribed exterior configuration. By the direct method of the calculus of variations, there exists at least one minimizer. We aim to characterize it.

\begin{claim} \label{claim:claim1}
    $E(2,3)$ consists of straight line segments.

    \upshape{As there is no mass constraint for $E_2$ and $E_3$, there is no Lagrange multiplier in the Euler-Lagrange equations. As a consequence, minimizers must have zero curvature at points of $E(2,3)$.}
\end{claim}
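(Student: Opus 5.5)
The plan is to argue locally on the interface $E(2,3)=\partial^*E_2\cap\partial^*E_3\cap B_R$ using the first variation of the anisotropic perimeter together with the regularity from Theorem~\ref{theorem:steiner}.

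\textbf{Setup.} By Theorem~\ref{theorem:steiner}, $\partial\mathcal{E}\cap B_R$ is a locally finite union of $C^{1,\frac12}$ arcs meeting only at triple junctions. Fix a point $x_0$ in the relative interior of an arc of $E(2,3)$ that stays away from the junctions. Choose $r>0$ so small that $B_r(x_0)\subset B_R$, that $B_r(x_0)$ does not meet $\overline{E_1}$ (possible since $E_1$ is closed and $x_0\notin\partial E_1$), and that in $B_r(x_0)$ the interface is a single graph $\{y=u(x)\}$ over its tangent line, with $E_2$ above and $E_3$ below.

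\textbf{First variation.} Any compactly supported perturbation $u+t\varphi$, with $\varphi\in C^\infty_c$ supported in the projection of $B_r(x_0)$, exchanges area only between $E_2$ and $E_3$ and leaves $|E_1|=m$ unchanged, so it is an admissible competitor. Its energy is
\[
J(u)=\int \phi(-u',1)\,dx,
\]
and minimality forces $\tfrac{d}{dt}J(u+t\varphi)|_{t=0}=0$ with no Lagrange multiplier. Hence $u$ is a weak solution of
\[
\frac{d}{dx}\big(\partial_1\phi(-u',1)\big)=0,
\]
so $\partial_1\phi(-u',1)$ is constant a.e.

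\textbf{Conclusion and main obstacle.} The main step is inverting this relation: I must show that $p\mapsto\partial_1\phi(-p,1)$ is strictly monotone, so that $u'$ is constant. Its derivative is $-\partial_{11}\phi(-p,1)$. By $1$-homogeneity, the Hessian of $\phi$ at $(-p,1)$ annihilates the radial direction. Uniform convexity (Note~\ref{note:properties_anisotropy} ii) makes it strictly positive on the complementary direction, which is not vertical, so $\partial_{11}\phi>0$. Thus the map is strictly monotone and $u'$ is constant on each such graph piece. Each arc of $E(2,3)$ is therefore a union of segments. Since the arc is $C^1$, these segments must be collinear, and so each arc of $E(2,3)$ is a straight segment.

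The delicate point is whether weak stationarity suffices: $u$ is only known to be $C^{1,\frac12}$, not $C^2$, so I cannot write a classical curvature equation. The argument above avoids this by working with the integrated (divergence) form of the Euler–Lagrange equation, which requires only $u\in C^1$. Constancy of $\partial_1\phi(-u',1)$ then follows from the du Bois-Reymond lemma, with no appeal to second derivatives of $u$.
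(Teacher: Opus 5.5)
Your proof is correct and takes the same route as the paper, which just notes that with no volume constraint on $E_2,E_3$ the Euler--Lagrange equation on $E(2,3)$ has no Lagrange multiplier, so the (anisotropic) curvature vanishes. You supply the details the paper leaves implicit: the weak divergence form with du Bois-Reymond, which avoids needing $C^2$ regularity, and the strict monotonicity of $p\mapsto\partial_1\phi(-p,1)$ from uniform convexity, which turns zero anisotropic curvature into $u'$ constant. One small slip: $B_r(x_0)$ avoids $E_1$ not because ``$E_1$ is closed'' but because near an interior point of an arc the Steiner graph is that single arc, so only $E_2$ and $E_3$ are present.
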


\begin{claim}
    $E(1, 2)$ and $E(1,3)$ are arcs of constant anisotropic curvature of the same magnitude but opposite sign. 
    
     \upshape{This is a consequence of the Euler-Lagrange equations and Theorem~\ref{theorem:steiner}}. 
\end{claim}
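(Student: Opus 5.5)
The claim to prove is the second one: on a minimizer in $B_R$, $E(1,2)$ and $E(1,3)$ are arcs of constant anisotropic curvature, with equal magnitude and opposite sign. I will derive this from first variations, using the regularity supplied by Theorem~\ref{theorem:steiner}.

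\textbf{Step 1: regularity.} By Theorem~\ref{theorem:steiner}, $\partial\mathcal{E}\cap B_R$ is a locally finite union of $C^{1,1/2}$ arcs that meet at triple junctions. Fix a relatively open subarc $\gamma\subset E(1,2)$ away from junctions. Locally, $\gamma$ is the graph of a $C^{1,1/2}$ function $u$ over a segment in a fixed direction.

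\textbf{Step 2: first variation on a single arc.} Take $\psi\in C_c^\infty$ supported in a small disc $U$ with $U\cap\partial\mathcal{E}=\gamma\cap U$. Deform $\gamma$ normally by $t\psi$, which trades area between $E_1$ and $E_2$.

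\begin{itemize}
\item If $\int\psi=0$, the variation preserves $|E_1|$ and is admissible, so minimality gives $\int \nabla\phi(\nu)\cdot \tau'\,\psi\,d\Haus^1=0$. In graph form this reads $\int \phi_\nu\text{-terms}\cdot\psi'=0$.
\item By the du Bois-Reymond lemma, $\frac{d}{ds}\bigl[\partial_p\Phi(u')\bigr]$ is constant in the weak sense, where $\Phi(p)=\phi(-p,1)$.
\item Since $\phi$ is $C^2$ and uniformly convex, $\Phi''>0$. Hence $u'$ is Lipschitz, and bootstrapping shows $u$ is smooth.
\item The anisotropic curvature $\kappa_\phi=\operatorname{div}_\tau\nabla\phi(\nu)$ is therefore a constant $\lambda_{12}$ on $\gamma$.
\end{itemize}

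\textbf{Step 3: one constant for the whole interface.} To show $\lambda_{12}$ does not depend on the connected arc of $E(1,2)$, and to relate it to $E(1,3)$, I use a two-site variation. Take bumps $\psi_1$ on an arc of $E(1,2)$ and $\psi_2$ on an arc of $E(1,3)$ (or on another arc of $E(1,2)$), with $\int\psi_1+\int\psi_2=0$ so that the total change of $|E_1|$ vanishes. Minimality then gives
\[
\lambda_{12}\int\psi_1+\lambda_{13}\int\psi_2=0,
\]
with each $\lambda$ computed using the outward normal of $E_1$. So all arcs bounding $E_1$ share a single Lagrange multiplier $\lambda$, measured with respect to $E_1$'s outward normal.

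Measured instead with a fixed orientation from $E_2$ to $E_3$, the normal of $E(1,2)$ points out of $E_1$ while the normal of $E(1,3)$ points into $E_1$. This produces curvatures $\lambda$ and $-\lambda$: the same magnitude with opposite sign, which is the claim. It is consistent with Claim~\ref{claim:claim1}, since $E(2,3)$ carries multiplier $0$ because it involves no constrained phase.

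\textbf{Step 4: Wulff arcs.} Constant anisotropic curvature $\lambda\neq 0$ means the Cahn–Hoffman field satisfies $\frac{d}{ds}\nabla\phi(\nu)=\lambda\tau$, where $\tau$ is the unit tangent. Integrating shows $\gamma$ lies on a translate of $\lambda^{-1}\partial W_\phi$, using the diffeomorphism $\nu\mapsto\nabla\phi(\nu)$ from Lemma~\ref{lemma:anisotrop_lens}. I also need $\lambda\neq0$; otherwise all interfaces would be straight and could not enclose positive area $m$.

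\textbf{Main obstacle.} The hardest step is the regularity bootstrap in Step 2, from $C^{1,1/2}$ to the weak Euler–Lagrange equation and then to constant curvature. It relies on uniform convexity, $\Phi''\ge c>0$, to invert $\partial_p\Phi$ with a Lipschitz inverse. The sign bookkeeping for the normals in Step 3 is routine but needs care.
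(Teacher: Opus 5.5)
Your proposal is correct and follows the paper's route. The paper only cites the Euler--Lagrange equations and Theorem~\ref{theorem:steiner}; you fill in the details: area-preserving graph variations, the constrained du Bois-Reymond lemma (which with $\Phi''>0$ gives $u\in C^2$, not $C^\infty$, but that is enough), and a two-site variation showing that all arcs of $\partial E_1$ share one multiplier, so the ``opposite sign'' is just an orientation convention.

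The only slip is in your extra Step~4, which the claim does not require. Straight interfaces can certainly enclose positive area (polygons), so $\lambda\neq 0$ must instead come from the structure of $E_1$. For example, each component has exactly two triple junctions (Claim~\ref{claim:two_tripl_junct}), and two straight edges joining the same two points enclose no area.
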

\begin{claim}\label{claim:terminate_triple}
    Singular points of $E(i,j)$ occur when analytic arcs meet, and these can only occur at triple junction points.

    \upshape{This follows from Theorem \ref{theorem:steiner}. As a result, each connected arc of the transition $E(2,3)$ either meets a triple junction point, necessarily involving $E_1$, or extends to the boundary of the ball $B_R$. }
\end{claim}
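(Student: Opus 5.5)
The statement immediately above is Claim~\ref{claim:terminate_triple}. It says that the only singular points of the interfaces $E(i,j)$ of a minimizer in $B_R$ are triple junctions, and hence that every connected arc of $E(2,3)$ ends either at a triple junction involving $E_1$ or on $\partial B_R$. The plan is to apply the Steiner regularity of Theorem~\ref{theorem:steiner} locally inside $B_R$, and then do a short combinatorial case analysis on the endpoints of an arc of $E(2,3)$.

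\emph{Step 1: local regularity.} Let $\mathcal{E}$ be a minimizer in $B_R$ with the prescribed exterior configuration. Take any open ball $B\Subset B_R$. Competitors supported in $B$ with the same volume of $E_1$ are admissible, so $\mathcal{E}$ restricted to $B$ is a minimal cluster in the sense of Theorem~\ref{theorem:steiner}. The volume constraint is harmless: volume-fixing variations supported away from $B$ give the required volume-adjustment property.

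Theorem~\ref{theorem:steiner} then says that $\partial\mathcal{E}\cap B$ is a locally finite union of $C^{1,\frac12}$ arcs. Each junction point is the endpoint of exactly three arcs with three distinct tangents. Any other point of $\partial\mathcal{E}\cap B$ lies in the interior of a single arc separating exactly two chambers. There it is a regular point, and by the Euler--Lagrange equation (Claim~\ref{claim:claim1} and the following claim) the arc is a segment or a Wulff arc, hence analytic. So a singular point of some $E(i,j)$ can only be a junction, i.e.\ a point where three distinct arcs, and therefore three chambers, meet. Since there are only three chambers, every such point is a triple junction of $E_1,E_2,E_3$.

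\emph{Step 2: endpoints of arcs of $E(2,3)$.} Let $\gamma$ be a maximal connected arc of $E(2,3)$. By Claim~\ref{claim:claim1} it is a straight segment, and by Step 1 it can only terminate at a junction or at $\partial B_R$. A junction on $\gamma$ is a triple point, so the third chamber meeting there is $E_1$. One more case must be excluded: $\gamma$ ending at an interior point with no junction, a free endpoint. Near such a point $E_2$ and $E_3$ would be the only chambers present, and the point would be neither regular nor a triple junction. This contradicts Theorem~\ref{theorem:steiner}. More directly, one can cut the segment back and give the small region to one chamber, which strictly lowers the perimeter. Local finiteness rules out accumulation of endpoints inside $B_R$.

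\emph{Main obstacle.} The delicate point is Step 1's justification that Theorem~\ref{theorem:steiner}, stated for minimal clusters, applies to our constrained problem in $B_R$ near interior points. The argument above handles this. The boundary $\partial B_R$ itself is not covered by the theorem, but that is exactly why the claim allows arcs to terminate there.
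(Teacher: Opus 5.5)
Your proposal is correct and takes the same route as the paper, which simply invokes Theorem~\ref{theorem:steiner}; you make explicit the localization to balls $B\Subset B_R$, the fact that three distinct arcs at a junction force three distinct chambers (so $E_1$ meets every junction on an $E(2,3)$ arc), and the exclusion of free endpoints. One harmless slip: ``Wulff arc, hence analytic'' holds only for analytic $\phi$, since for a $\mathcal{C}^2$ anisotropy the arcs are only as smooth as $\partial W_\phi$, but your argument needs only their regularity, not analyticity.
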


\begin{claim} \label{claim:no_islands}
  $E_2$ and $E_3$ cannot have bounded connected components (no islands).

\end{claim}
  \begin{proof}
We assume, without loss of generality, that $E_2$ has a bounded connected component $\Omega \subset E_2$. If the boundary of this component, $\partial \Omega$, has non-trivial intersection with the boundary of $E_3$, $\partial E_3$, then we define a modified competitor cluster $\tilde{\mathcal{E}}=(\tilde{E}_1, \tilde{E}_2, \tilde{E}_3)$, which is identical to the original cluster $\mathcal{E}$ but we have reassigned the bounded connected component $\Omega \subset E_2$ to the region $E_3$. This is:

\[
\tilde{E}_1=E_1, \qquad
 \tilde{E}_2= E_2 \setminus \Omega, \qquad 
\tilde{E}_3=E_3 \cup \Omega .
\]

 $\tilde{\mathcal{E}}$ is an admissible cluster that agrees with $\mathcal{E}$ outside a compact set. However, its perimeter is strictly smaller, as we have erased an interface. This contradicts the minimality of $\mathcal{E}$.

If the boundary of $\Omega$ is disjoint from $E_3$, then it is necessarily completely surrounded by a connected component $\tilde{\Omega}$ of the region $E_1$. Now, we use a translational argument to create a partition with the same volume constraints and perimeter, but forcing $\Omega$ to meet the boundary of $\tilde{\Omega}$, violating the regularity condition. This also contradicts the minimizing property of $\mathcal{E}$.

We conclude neither $E_2$ nor $E_3$ can have bounded connected components. 
\end{proof}

\begin{claim}\label{claim:e1surrounded}
$E_1$ cannot have bounded connected components entirely surrounded by $E_2$ or $E_3$.
\end{claim}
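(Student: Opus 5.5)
We argue by contradiction. Suppose that $E_1$ has a bounded connected component $\Omega$ with $\partial\Omega\subset\partial E_2$, so that $\Omega$ is entirely surrounded by $E_2$; the case of $E_3$ is symmetric. The idea is to move $\Omega$ rigidly until it touches the interface $E(2,3)$ or another part of the boundary of $E_1$. Since the anisotropic perimeter is translation invariant, this can be done without changing the volume or the perimeter. The result is a minimizer that violates the Steiner regularity of Theorem~\ref{theorem:steiner}, exactly as in the second part of Claim~\ref{claim:no_islands}.

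First, by Claim~\ref{claim:no_islands}, $E_2$ has no bounded components. Hence the component $U$ of $E_2$ surrounding $\Omega$ is unbounded and reaches $\partial B_R$, where it coincides with the half-plane $H^+$ by Definition~\ref{def:exterior_condition}. Since $\Omega$ is bounded, $\Omega\subset E_1\subset B_R$, and $\overline\Omega$ is a compact subset of $U\cup\Omega$, there is a direction $v$ along which we may translate. Let $t^*=\sup\{t\ge 0 : \overline{\Omega+sv}\subset U\cup\Omega \text{ for all } s\le t\}$. For $t<t^*$, define the competitor $\mathcal{E}^t$ by replacing $\Omega$ with $\Omega+tv$, assigning $\Omega\setminus(\Omega+tv)$ to $E_2$ and removing $\Omega+tv$ from $E_2$. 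This competitor is admissible: the volume $|E_1|=m$ is preserved, and for $t\le t^*$ nothing changes outside a compact subset of $B_R$, provided we choose $v$ pointing toward $L$ or toward another component of $E_1$, so that the translate stays inside $B_R$. Its perimeter equals $P_\phi(\mathcal{E};B_R)$, because $\partial\Omega$ separates only $E_1$ from $E_2$ with the same normals, and translation preserves $\int\phi(\nu)\,d\Haus^1$. So every $\mathcal{E}^t$ is again a minimizer.

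At $t=t^*$, the boundary $\partial(\Omega+t^*v)$ touches $\partial U\setminus\partial\Omega$, which consists of interfaces with $E_3$ or with other parts of $E_1$. In both cases we get a point where boundary arcs meet tangentially with no $E_2$ in between. If $\Omega+t^*v$ touches $E_3$ along a point or an arc, then near that point the limit minimizer has either a junction of four arcs, or two tangentially meeting arcs, or a point where the arcs of $\partial E_1$ and $\partial E_3$ meet without a third tangent direction. All of these contradict Theorem~\ref{theorem:steiner}: every junction has exactly three arcs with three distinct tangents. If $\Omega+t^*v$ touches another component of $E_1$, then $E_1$ has a contact point with coinciding tangents, which again contradicts the Steiner property, or can be removed to strictly reduce perimeter. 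The limit $\mathcal{E}^{t^*}$ is still a minimizer by lower semicontinuity together with the constant perimeter.

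The main obstacle is justifying the contact configuration at $t^*$. One must ensure that $t^*<\infty$ by choosing $v$ properly, which is guaranteed because $L\cap B_R$ separates $E_2$ from $E_3$. One must also check that the touching point genuinely produces a singularity not allowed by Theorem~\ref{theorem:steiner}, and not merely an admissible triple junction. I would handle this by first touching at a single point, which generically produces a point of order four or a cusp. If the contact happens along a whole arc, I would note that then the interface between $\Omega+t^*v$ and the touched region can be deleted, giving a strictly smaller perimeter, which is a direct contradiction.
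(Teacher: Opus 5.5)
Your proposal is correct and follows essentially the same route as the paper: translate $\Omega$ rigidly inside the surrounding component of $E_2$, which preserves area and anisotropic perimeter, until it touches the rest of $\partial\mathcal{E}$; this yields a minimizer with a non-Steiner contact point, contradicting Theorem~\ref{theorem:steiner}. Your version is more detailed than the paper's (choice of $v$, definition of $t^*$, the arc-contact case), but the argument that $t^*<\infty$ should use the chain of $E_1$-components and $E(2,3)$-arcs joining $p_1$ to $p_2$ inside $B_R$, not $L\cap B_R$, which need not belong to $\partial\mathcal{E}$ inside the ball.
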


\begin{proof}
Assume, without loss of generality, that $E_1$ has a bounded connected component 
$\Omega\subset E_1$ such that it is completely surrounded by $E_2$. We then create an admissible competitor by applying a translation and forcing $\Omega$ to touch the boundary of $E_2$. The new translated configuration is valid when it comes to preserving area and perimeter, but violates the regularity condition (no valid junction). Thus, we arrive at a contradiction with the minimality of $\mathcal{E}$. We deduce connected components of $E_1$ cannot be completely enclosed inside a component of an infinite phase chamber. 
\end{proof}

\begin{claim} \label{claim:two_tripl_junct}
    Each connected component of $E_1$ has exactly two triple junction points on its boundary.

    \begin{proof}
    First, every connected component of $E_1$ is bounded, since $E_1$ has finite volume (Lemma~\ref{lemma:finitevolume_bounded}). Moreover, each connected component of $E_1$ must contact both $E_2$ and $E_3$ on its boundary (Claim~\ref{claim:e1surrounded}). In addition, the number of connected components of $E_1$ is finite (Theorem~\ref{theorem:steiner}).

    Let $\Omega \subset E_1$ be  a connected component of $E_1$. Its outer boundary $\partial \Omega$ is a closed $C^1$ curve, and every point must belong either to the transition $E(1,2)$ or $E(1,3)$. Since $\partial \Omega $ must contain arcs of both types, the curve must switch from the interface $E(1,2)$ to $E(1,3)$ at some point. Claim~\ref{claim:terminate_triple} states that a change of phase can only happen at a triple junction. As $\partial \Omega$ is necessarily a closed curve, for each transition from $E(1,2)$ to $E(1,3)$  there must be a corresponding transition from $E(1,3)$ to $E(1,2)$ (with opposite orientation) to close the loop. Therefore, $\partial \Omega$ must contain $2n$ triple junctions, with $n\geq 1$.
    
    We argue by contradiction to show that $n=1$, that is the number of triple junctions being $2$. Pick one of the triple junctions of $\partial \Omega$ and call it $a_1$. From $a_1$ there emanate three curves: two of them belong to $\partial \Omega$; the other is an interface of the type $E(2,3)$, thus it lies entirely in $\overline {B_R} \setminus\Omega$.  We follow this $E(2,3)$-curve. First, by Steiner regularity (Theorem~\ref{theorem:steiner}) each interface is a $\mathcal{C}^1$ embedded arc and triple junctions are isolated. Therefore, this curve starting at $a_1$ cannot accumulate inside $B_R$. Second, the $E(2,3)$-curve cannot close up to form a bounded loop: such a loop would enclose a bounded connected component of either $E_2$ or $E_3$, contradicting Claim~\ref{claim:no_islands}. Thus, the curve cannot meet itself or another already-traced $E(2,3)$-curve in a way that produces a closed bounded region for the same reason. Third, because there are only finitely many connected components of $E_1$ (this follows from Theorem~\ref{theorem:steiner}), the curve cannot keep visiting new components $\Lambda_i \subset E_1$ forever; it must eventually terminate. By Theorem~\ref{theorem:steiner} the only possible terminations for an interior $E(2,3)$-arc are another triple junction (involving $E_1$) or the outer boundary of the partition,  $\partial B_R$. Considering previous exclusions, the $E(2,3)$-curve originated at $a_1$ cannot end at another triple junction without creating a forbidden bounded island, so it must eventually reach $\partial B_R$. 
    Now, turning to the exterior boundary condition: outside $B_R$ the partition coincides with a line intersecting $\partial B_R$ in two fixed points $p_1, p_2 \in \partial B_R$ . Hence, the interface $E(2,3)$ meets $\partial B_R$ only at the two points $p_1, p_2$. In particular, any curve $ \gamma \subset E(2,3)$ inside $\overline{B_R}$ that reaches the boundary must end at either $p_1$ or $p_2$. 

    We repeat the same construction for a second triple junction $a_2 \in\partial \Omega$. Its $E(2,3)$-curve likewise must end at $\partial B_R$, and it cannot meet the previously followed path before reaching the boundary, as it would produce a bounded region, contradicting Claim~\ref{claim:no_islands}. Therefore the two paths from $a_1$ and $a_2$ must end at two distinct points among $\{p_1, p_2 \}$. 
    If $n > 1$ (so $\partial \Omega$ has at least four triple junctions), pick a third triple junction $a_3$. The same reasoning enforces its $E(2,3)$-curve to end at $\partial B_R$, but both points $p_1$ and $p_2$ are occupied by disjoint paths from $a_1$ and $a_2$, and any merging prior to the boundary would create a contradiction with Claim~\ref{claim:no_islands}. This shows that $n>1$ is impossible.

   We deduce that necessarily $n=1$, and for each bounded connected component $\Omega \subset E_1$ there are exactly two triple junctions.

    \end{proof}
\end{claim}
\begin{claim}
    $E_1$ consists of a single anisotropic lens of total mass $m$.
\end{claim}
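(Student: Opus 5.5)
We will show two things: every connected component of $E_1$ is an anisotropic lens in the sense of Lemma~\ref{lemma:anisotrop_lens}, and $E_1$ has only one connected component.

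\emph{Step 1: each component is a lens.} Let $\Omega$ be a connected component of $E_1$. By Claim~\ref{claim:two_tripl_junct}, $\partial\Omega$ has exactly two triple junctions $a_1,a_2$. These split $\partial\Omega$ into two arcs, one of type $E(1,2)$ and one of type $E(1,3)$. By the second claim, both arcs have constant anisotropic curvature of the same magnitude, so each is an arc of a translate of a common dilate $\lambda W_\phi$. The $E(2,3)$-curves leaving $a_1$ and $a_2$ are straight (Claim~\ref{claim:claim1}). In the proof of Claim~\ref{claim:two_tripl_junct} these curves were shown to run to the distinct points $p_1,p_2$. Between a junction and $\partial B_R$ no further junction can occur, since any other junction would lie on another component of $E_1$ and would create an island. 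Hence each of these curves is a single segment from $a_i$ to $p_i$. Young's law at $a_i$ (Theorem~\ref{theorem:steiner} and \cite[Theorem 3.6]{Franceschi2023}) then relates the normal of the segment to the two Wulff-arc normals. By the uniqueness in \cite[Lemma 3.4]{Franceschi2023}, the two tangent pairs are determined by the normal $\nu$ of the segment. The same reasoning as in Lemma~\ref{lemma:anisotrop_lens} then shows that $\Omega$ is the anisotropic lens with outer normal $\nu$ and area $|\Omega|$.

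\emph{Step 2: $E_1$ is connected.} The boundary condition fixes only two endpoints $p_1,p_2$ for $E(2,3)$. A second component $\Omega'$ would require its own two $E(2,3)$-curves reaching $\partial B_R$ at points distinct from those already used, or merging before the boundary. Either outcome is excluded as in the argument for Claim~\ref{claim:two_tripl_junct}. The one remaining configuration is a chain: $p_1\to\Omega\to\Omega'\to p_2$, with a middle $E(2,3)$ segment joining a vertex of $\Omega$ to a vertex of $\Omega'$. We rule this out by comparison. Translate $\Omega'$ along the middle segment toward $\Omega$ until its vertex meets the vertex of $\Omega$. This keeps the areas and does not increase perimeter: the middle segment shrinks and the outer segment to $p_2$ changes by at most a comparable amount. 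Since $R$ is large, the direction change is small and is controlled by the triangle inequality for the norm $\phi$. The resulting configuration has a junction of four arcs, which violates Theorem~\ref{theorem:steiner}. This is the same translation device used in Claims~\ref{claim:no_islands} and~\ref{claim:e1surrounded}.

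A cleaner alternative for Step 2 is to merge the two lenses into a single lens of area $|\Omega|+|\Omega'|$. Its perimeter scales like $\sqrt{m}$, and subadditivity $\sqrt{a+b}<\sqrt a+\sqrt b$ gives a strict decrease. To preserve the boundary condition, one then reconnects by straight segments. The volume constraint forces $|\Omega|=m$ once there is a single component.

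\emph{Main obstacle.} The hard step is Step 2, showing that the translation or merge competitor really lowers the anisotropic perimeter. The extra length created by re-routing the segments to $p_1,p_2$ has to be controlled against the $\sqrt{m}$-type saving. I would first try to make the perimeter-neutral translation precise, using $\phi(v+w)\le\phi(v)+\phi(w)$ on the polygonal path $p_1\to\cdots\to p_2$. The aim is to show that collapsing the middle segment never increases the total $\phi$-length of the straight parts, and then to conclude by the regularity contradiction.
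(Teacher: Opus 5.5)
Your argument is essentially the paper's: connectedness by translating one component of a chain along the connecting $E(2,3)$ segment until it touches another, producing a non-Steiner junction, followed by identifying the single component with the lens of Lemma~\ref{lemma:anisotrop_lens}. The paper gets exact perimeter invariance from the parallelism of all connecting segments, which Young's law forces; your triangle inequality for the norm $v\mapsto\phi(v^\perp)$ gives non-increase of perimeter directly, so you need neither $R$ large nor the merging alternative for that step.

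Three corrections. First, do the connectedness step first, as the paper does. Your Step~1 sentence claiming that an $E(2,3)$-curve cannot meet another junction without creating an island is false: the chain you treat in Step~2 is exactly such a configuration, and it contains no island. Second, the paper also uses an auxiliary problem in which components can be translated freely, in order to keep the components away from $\partial B_R$. Your argument leaves this implicit, and it matters both for the interior regularity you invoke and for your translated $\Omega'$ staying admissible in $B_R$.
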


\begin{proof}

Let $\Omega_1, \dots , \Omega_r$ be the connected components of $E_1$. We analyze the global structure of these components using an auxiliary minimization problem. 
\medskip

\textit{Step 1: Reduction to a single component.}
 We first prove that, if $R$ is large enough, the bounded connected components $\{\Omega_i\}$ do not touch the boundary of $B_R$. We consider an auxiliary minimization problem, where we allow any possible translation of the components in the whole plane and any possible union of straight line segments such that the union of both sets is a compact connected set containing the two points $p_1$ and $p_2$. This is a problem that has a solution, since it only requires fixing the position of a finite number of segments, and the total length of these segments is continuous and coercive with respect to the position of the endpoints. So in this setting, $\Omega_i$ are not necessarily contained in $B_R$, and we aim to see that for $R$ large enough the original partition $\mathcal{E}$ is also a minimizer for this auxiliary problem. 

 Any minimizer of the auxiliary problem can be described as an abstract graph $\Gamma$ where the vertices are the components $\{\Omega_i\}$ and the points $\{p_1, p_2\}$, while the edges are the $E(2,3)$ interfaces. By Claim~\ref{claim:two_tripl_junct} and by considering that the points $p_1$ and $p_2$ are the only vertices of degree one, it follows that the graph is a simple path. Therefore, the components $\Omega_i$ form a single chain connecting $p_1$ and $p_2$. By the local description of triple junctions and Young's law, from each triple point there emanates a ray of the interface $E(2,3)$ which is parallel to all the other rays. Thus, all components are connected by parallel segments.

Now, we assume by contradiction that $E_1$ contains more than one connected component. Since all components lie along the same chain, any component may be translated in the direction parallel to these rays (i.e., perpendicular to the prescribed normal $\hat{n}$) without changing its volume or the total anisotropic perimeter until it comes into contact with another component. This modified configuration preserves the total perimeter and masses. However, the existence of such a point of contact contradicts the regularity theory for anisotropic triple junctions (Theorem~\ref{theorem:steiner}). Therefore, $E_1$ cannot have more than one connected component.

\medskip

\textit{Step 2: Equi-boundedness.} 
Since $E_1$ is formed by a single component $\Omega$ connected to $p_1$ and $p_2$ by parallel segments, and these points are diametrically opposite on $\partial B_R$, we can translate $\Omega$ along the axis parallel to the segments to centre it at the origin. For $R$ large enough, $\Omega$ is contained inside $B_R$. By Lemma~\ref{lemma:anisotrop_lens}, $\Omega$ must be an anisotropic lens of mass $m$ in order to satisfy all the regularity conditions. As we can translate the lens so it does not touch the boundary of $B_R$, this confirms that the original partition is the minimizer. 
\end{proof}

\begin{figure}[htbp]
    \centering
    \includegraphics[width=0.7\textwidth]{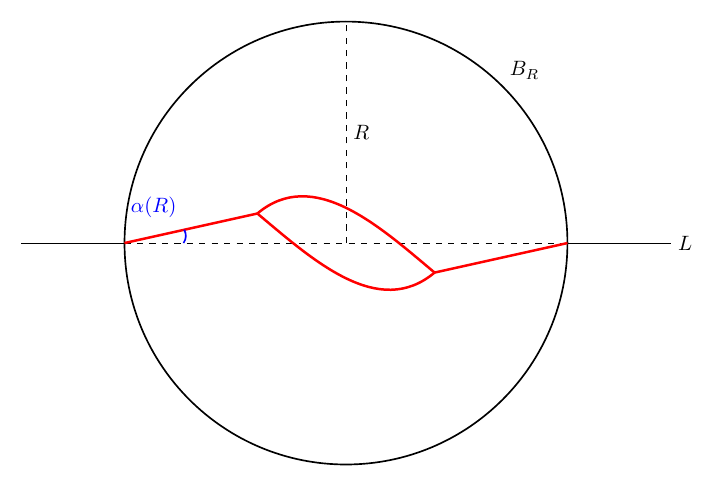} 
    \caption{Configuration of the minimal $(1,2)$-cluster inside $B_R$ for a given $R$.}
    \label{fig:tilt_lens}
\end{figure}

\subsection*{Uniqueness in $B_R$}

The previous steps already completely determine the structure of any minimizer. We now show that this structure is unique.

The local description implies that any minimizer must consist of a single interface attached to two exterior rays meeting $\partial B_R$ at the points $p_1,p_2$. By Claim~\ref{claim:no_islands} and Claim~\ref{claim:e1surrounded}, no additional components or alternative interfaces can exist inside $B_R$. Lemma~\ref{lemma:anisotrop_lens} shows that the anisotropic lens with fixed normal $\hat{n}$ and mass $m$, determined by the prescribed exterior configuration, is unique. Since no other configuration is compatible with the boundary conditions and the local optimality conditions, the full cluster is uniquely determined (up to translations in the direction of the exterior rays).

\begin{corollary}[Existence of the lens minimizers for general anisotropies] \label{cor:exist_gen_lens}
Let $\phi$ be a general anisotropy in $\R^2$. There exists a locally minimizing $(1,2)$-cluster with an anisotropic lens shape, satisfying the prescribed boundary condition (Definition~\ref{def:exterior_condition}) and the volume constraint.

\begin{proof}
    The proof follows by an approximation argument. Given any general anisotropy $\phi$ we can uniformly approximate it on $B_R$ by a sequence of $\mathcal{C}^2$ and uniformly convex anisotropies $\{\phi_k\}_{k\in \mathbb{N}}$. This uniform convergence implies the Hausdorff convergence of their corresponding Wulff shapes, $W_{\phi_k} \xrightarrow{\mathcal{H}} W_\phi$.

    By our previous topological and regularity results, which depend only on the smoothness and strict convexity of the anisotropy (from Claim~\ref{claim:claim1} to Claim~\ref{claim:two_tripl_junct}), for each $k\in \mathbb{N}$ there exists a minimizing cluster $\mathcal{E}_k$ which takes the form of an anisotropic lens (not necessarily symmetric). Specifically, its bounded chamber $E_{1,k}$ is convex and formed exactly by two arcs of scaled and translated Wulff shapes of $\phi_k$, connected at two triple junctions from which two exterior rays emanate. 
    
    The $\phi_k$-perimeters of these clusters are uniformly bounded (Lemma~\ref{lemma:upper_estim_perim}), and by the compactness theorem for sets of finite perimeter we deduce that, up to a subsequence, $\mathcal{E}_k$ converges in $L^1_{\text{loc}}$ to a limit cluster $\mathcal{E}$, with locally bounded perimeter, satisfying the constraints (the exterior condition assures no escape to infinity). 

    Using the uniform convergence $\phi_k \rightarrow \phi$ and the lower semicontinuity of the anisotropic perimeter with respect to $L^1_{\text{loc}}$ convergence, we obtain
    \[
    P_\phi(\mathcal{E}; B_R) \leq \liminf_{k \rightarrow \infty}{P_{\phi_k}(\mathcal{E}_k; B_R) }.
    \]
    Let $\mathcal{F}$ be any valid competitor cluster in $B_R$. We have $P_{\phi_k}(\mathcal{E}_k) \leq P_{\phi_k}(\mathcal{F}) + \Omega_k$, where $\Omega_k$ is an error term that  accounts for the difference between $\mathcal{E}_k$ and $\mathcal{E}$ near the boundary $\partial B_R$. Since $\mathcal{E}_k \xrightarrow{L^1_{\text{loc}}} \mathcal{E}$, this error term goes to zero as $k\rightarrow \infty$; and passing to the limit, using the uniform convergence and the lower semicontinuity, we obtain $P_\phi(\mathcal{E}) \leq P_\phi(\mathcal{F})$.

    Finally, since the bounded chambers $E_{1,k}$ are convex and the perimeter (and area) are locally bounded, by Blaschke's selection theorem we have that $\partial E_{1,k} \xrightarrow{\mathcal{H}} \partial E_1$, with $E_1$ convex and with bounded perimeter. Because the boundaries of the bounded chambers $E_{1,k}$ are composed of arcs of $W_{\phi_k}$, the limit boundary of the finite chamber $E_1$ must be composed of arcs of the limit Wulff shape $W_\phi$. The triple junctions of $E_{1,k}$ converge to triple junctions in $E_1$, and the interfaces $E(2,3)_k$ naturally converge to the corresponding limit rays. 
    
    A convex lens made by Wulff arcs, together with the rays separating $E_2$ from $E_3$, constitutes the anisotropic lens cluster, which concludes the proof.
    
\end{proof}
    
\end{corollary}

\begin{remark}\label{rem:lost_prop}
    The regularity and uniqueness of the minimizer are generally lost in the limit process. The boundary of the bounded chamber $E_{1}$ consists of two arcs which are limits of smooth Wulff arcs, but the strict $\mathcal{C}^1$ regularity of the interfaces might be lost in the general $W_\phi$ limit. Moreover, other minimal configurations may exist, featuring non-standard singularities such as quadruple junctions.
\end{remark}

\begin{example}[Loss of uniqueness with a general anisotropy]
As an example of a general anisotropy in $\R^2$, let us consider the positively 1-homogeneous convex function defined by  
\[
\phi(\nu)= |\nu_1|+|\nu_2|.
\]
Its associated Wulff shape is the square $[-1,1]^2$.

Suppose we want to find a local minimizer for a $(1,2)$-cluster under the anisotropic density. Since quadruple junctions can be stable for this anisotropy, we expect the configuration illustrated in Figure~\ref{fig:square} to be a valid minimizer under certain exterior boundary conditions. The existence of this configuration strongly suggests the non-uniqueness of the lens type minimizer.

\begin{figure}[htbp]
    \centering
    \includegraphics[width=0.5\textwidth]{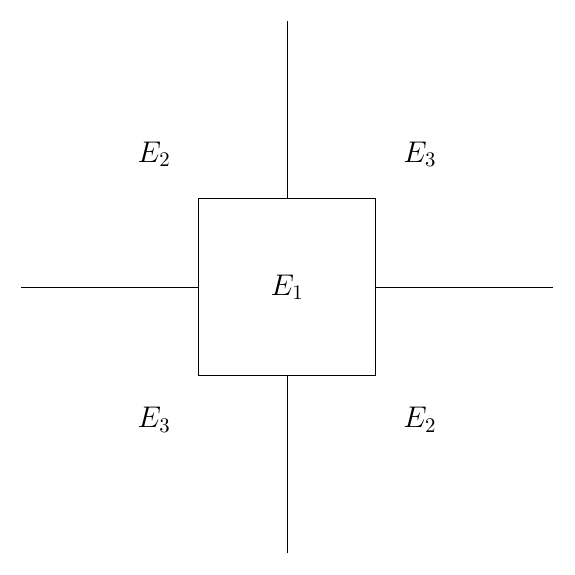} 
    \caption{Example of a possible minimizer with a non lens shape, given by the anisotropic density $\phi(\nu)= |\nu_1|+|\nu_2|.$}
    \label{fig:square}
\end{figure}
\end{example}

\subsection{The $(1,3)$-cluster}

We consider a partition 
\[
\mathcal{E} = (E_1, E_2, E_3, E_4)
\]
of $\R^2$ satisfying:
\[
|E_1| = m > 0,\qquad |E_2| = |E_3| = |E_4| =\infty,
\]
for a fixed $m, R\in \R^+$, $R$ sufficiently large, in $\R^2$.

We want to minimize the anisotropic perimeter

\[
P(\mathcal{E}; B_R)
= \frac12 \sum_{i=1}^4 P(E_i; B_R),    \qquad
 B_R \subset \R^2.
\]

\begin{definition}[Exterior boundary condition]\label{def:exterior_condition_triple}
We say that a partition $\mathcal{E}=(E_1,E_2,E_3,E_4)$ of $\R^2$ satisfies the
\emph{prescribed exterior configuration} outside the ball $B_R$ if:
\begin{enumerate}
    \item The finite phase is contained in the ball, i.e.,\ $E_1 \subset B_R$.

    \item There exist three distinct half-lines $L_1,L_2,L_3\subset\R^2$ starting at the origin such that,
    outside the ball $B_R$, the boundary of the partition coincides with their union:
    \[
        \partial\mathcal{E}\setminus B_R
        = (L_1\cup L_2\cup L_3)\setminus B_R .
    \]

    \item If $\nu_1,\nu_2,\nu_3\in\mathbb{S}^1$ denote the unit normals to the interfaces
    between the infinite phases determined by the half-lines $L_i$, then the directions of
    $L_1,L_2,L_3$ satisfy the anisotropic Young law
    \[
        \nabla\phi(\nu_1)+\nabla\phi(\nu_2)+\nabla\phi(\nu_3)=0 .
    \]
     \item We define $ \{p_1, p_2, p_3\}=(L_1\cup L_2\cup L_3) \cap \partial B_R$.
\end{enumerate}
\end{definition}

\begin{remark}[Reduction to a fixed exterior configuration]\label{remark:flat_ext_cond3}
    The assumption of the lines separating the exterior phases $E_2$, $E_3$, and $E_4$ being fixed and satisfying Young's Law entails no loss of generality, analogous to the lens case (Remark~\ref{remark:flat_ext_cond}).
\end{remark}


\begin{proposition}[Local minimality of the Anisotropic triod n $B_R$]\label{prop:local_triod}
Let $\phi$ be a regular anisotropy on $\R^2$. 
Given a radius $R>0$ sufficiently large, a direction $\hat{n}$, and a mass $m>0$, let $T_\phi$ be the associated \emph{anisotropic Reuleaux triangle} from Lemma~\ref{lemma:anisotrop_triple_lens}. We define the associated \emph{standard anisotropic triod cluster} by joining the vertices of $T_\phi$ with three half-lines with the normals required to fulfill Young's law, ending at the boundary of $B_R$. Then, this configuration is the unique minimizer of the anisotropic perimeter in $B_R$ among all clusters $\mathcal{E}$ satisfying the prescribed boundary condition (Definition~\ref{def:exterior_condition_triple}) and the volume constraint. 
\end{proposition}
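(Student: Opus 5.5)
I would follow the same two-part scheme as for Proposition~\ref{prop:local_lens}: first a characterization of every minimizer, then a uniqueness argument. Existence of a minimizer in $B_R$ with the exterior configuration of Definition~\ref{def:exterior_condition_triple} follows from the direct method, using the compactness of Caccioppoli partitions and the lower semicontinuity of $P_\phi$. The exterior condition fixes the data outside $B_R$, so no mass can escape.

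I would then transfer the local claims verbatim to four phases. The analogue of Claim~\ref{claim:claim1} says that the interfaces $E(i,j)$ with $i,j\in\{2,3,4\}$ are straight segments, since there is no Lagrange multiplier. The interfaces $E(1,j)$ are Wulff arcs of a common scaled shape $\lambda W_\phi$, with the same anisotropic curvature magnitude because a single multiplier is attached to $E_1$. Singular points are triple junctions satisfying Young's law, by Theorem~\ref{theorem:steiner}. The arguments of Claims~\ref{claim:no_islands} and~\ref{claim:e1surrounded} exclude islands of $E_2,E_3,E_4$, and exclude components of $E_1$ surrounded by a single infinite phase.

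The combinatorial step replaces Claim~\ref{claim:two_tripl_junct}. The exterior boundary graph meets $\partial B_R$ only at $p_1,p_2,p_3$. Following the $E(i,j)$-curves with $i,j\geq 2$ from the triple junctions on a component $\Omega$ of $E_1$, none of them can close up (that would create an island), so each must end at some $p_k$ or at a junction on another component. I would then count degrees in the abstract graph whose vertices are the components $\Omega_i$, the points $p_k$, and possibly triple points where three infinite phases meet. This graph is a tree with three leaves $p_1,p_2,p_3$, since any cycle would enclose an island. I expect to show that each $\Omega_i$ has either two or three outgoing segments.

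The main obstacle is reducing to a single component with three junctions, and I would use the translation argument of Step~1 of the lens case. A component of degree two lies on a path of parallel segments, so it can be slid along that direction until it touches a neighbour or an infinite-phase triple point, which contradicts regularity. Degree-two components therefore cannot coexist with other vertices. This leaves two possibilities: a single component of degree three, or a configuration with a Steiner-type junction of three infinite phases. In the latter, I would slide a component along its segment into that junction. That preserves mass and perimeter, because the segment directions are fixed by Young's law, and it produces a forbidden contact. I expect this to be the delicate point, since the segments adjacent to the junction are not all parallel and the translation must keep the perimeter unchanged.

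Once a single component $\Omega$ with three vertices remains, its outgoing rays must carry the normals $\nu_1,\nu_2,\nu_3$ prescribed at infinity. Young's law at each vertex, together with the uniqueness of the triple from \cite[Lemma 3.4]{Franceschi2023}, forces the three boundary arcs to be the Wulff arcs used in Lemma~\ref{lemma:anisotrop_triple_lens}. Hence $\Omega=T_\phi$ with the $\lambda$ fixed by the mass $m$. For $R$ large, $T_\phi$ fits inside $B_R$ with its rays reaching $p_1,p_2,p_3$. Uniqueness then follows from the uniqueness statement of Lemma~\ref{lemma:anisotrop_triple_lens}: the rays must pass through the three fixed points $p_k$ in fixed directions, which pins down the translation, up to the same caveat on exact alignment discussed in Remark~\ref{remark:flat_ext_cond3}.
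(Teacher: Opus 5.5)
Your outline follows the paper's route closely: direct method, transfer of the local claims, a tree whose only leaves are $p_1,p_2,p_3$, sliding degree-two components along their parallel segments, and identification via Lemma~\ref{lemma:anisotrop_triple_lens}. However, it omits the one step where the hypothesis ``$R$ sufficiently large'' is really used, and without it the classification does not go through. All your local information (Wulff arcs, Young's law at every vertex of a component) is interior regularity in the open ball $B_R$. Nothing in your argument stops the closure of a component of $E_1$ from reaching $\partial B_R$, for instance by having one of the $p_k$ as a vertex. At such a point the exterior ray is frozen and no Young balance is available. Yet in your graph this component still appears as a vertex of degree three, so your conclusion ``Young's law at each vertex forces $\Omega=T_\phi$'' fails. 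The same issue affects your slides: you perform them in the constrained problem, so you also need the translated component to stay inside $B_R$. The paper handles both points by passing to an auxiliary problem in which components may be translated freely in the plane, and then showing that for large $R$ the original minimizer solves it. The total anisotropic length $f(q)$ of three segments joining a point $q$ to $p_1,p_2,p_3$ is convex, and it is minimized at the origin precisely because the directions of $L_1,L_2,L_3$ satisfy Young's law. Hence moving the chamber a distance comparable to $R$ from the origin costs an amount of order $R$, while the chamber's own contribution does not grow with $R$. You need an energy comparison of this kind before invoking the regularity-based classification. Your only use of large $R$, fitting $T_\phi$ inside $B_R$, produces a competitor, not a description of every minimizer.

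Two smaller remarks. First, the Steiner-junction case you flag as delicate needs no extra sliding. In a finite tree $\sum_v(\deg v-2)=-2$. With exactly three leaves and every component of degree at least two, a triple point of three infinite phases (degree three) forces every component to have degree two, which you have already excluded, while $|E_1|=m>0$ requires at least one component. So the unique internal vertex is a single component of degree three; this is the paper's ``a tree with three leaves and no vertices of degree two is a triod''. If you did slide anyway, the component would have degree two, so its two segments are parallel and the perimeter is preserved. Second, your claim that the segments leaving $\Omega$ carry the normals prescribed at infinity is not justified: they join the vertices of $\Omega$ to the fixed points $p_k$ and may be tilted. The paper only records these tilts $\alpha_i(R)$ and defers to Remark~\ref{remark:flat_ext_cond3}, so on this point your proposal and the paper are on the same informal footing.
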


The proof of Proposition~\ref{prop:local_triod} is divided into two parts. First, we characterize the geometric properties of any minimizer; then, we prove its uniqueness. 

\subsection*{Existence in $B_R$}

Fix $R>0$. We consider the anisotropic perimeter minimization problem in $B_R$ with prescribed exterior configuration. By the direct method of the calculus of variations, there exists at least one minimizer. We aim to characterize it.

\begin{claim}  \label{claim:straight_arcs_triple}
    $E(2,3)$, $E(2,4)$, $E(3,4)$ are straight line segments.

    \upshape{As there is no mass constraint for $E_2$, $E_3$ and $E_4$, there is no Lagrange multiplier in the Euler-Lagrange equation. As a consequence, minimizers must have zero curvature at points of $E(2,3)$, $E(2,4)$, $E(3,4)$.}
\end{claim}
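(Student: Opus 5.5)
The plan is to show that on each of the interfaces $E(2,3)$, $E(2,4)$, $E(3,4)$ the first variation of the anisotropic perimeter vanishes for every compactly supported normal deformation, with no Lagrange multiplier involved. From that, the anisotropic curvature of each such arc is identically zero, and a regular anisotropy then forces the arc to be straight.

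\emph{Step 1: local regularity.} Fix a point $x$ in the relative interior of an arc of $E(i,j)$ with $i,j\in\{2,3,4\}$, away from triple junctions. By Theorem~\ref{theorem:steiner}, there is a small ball $B_\rho(x)\subset B_R$ in which $\partial\mathcal{E}$ is a single $C^{1,\frac12}$ arc. This arc separates $E_i$ from $E_j$, and $E_1$ does not meet the ball. After a rotation, we write the arc as a graph $u$ over an interval $I$.

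\emph{Step 2: no multiplier.} Take any $\psi\in C_c^\infty(I)$ and form the competitor in which the graph $u$ is replaced by $u+t\psi$ inside $B_\rho(x)$, exchanging the corresponding area between $E_i$ and $E_j$. This competitor agrees with $\mathcal{E}$ outside a compact subset of $B_R$ and leaves $|E_1|$ unchanged. The areas of $E_i$ and $E_j$ do change, but these regions are not constrained. So the competitor is admissible for every $t$, and no volume-fixing correction or multiplier is needed, which is exactly the point of Claim~\ref{claim:straight_arcs_triple}. Minimality then gives $\frac{d}{dt}\big|_{t=0}\int_I \phi(-u'-t\psi',1)\,ds=0$. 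In weak form this reads $\int_I \partial_1\phi(-u',1)\,\psi'\,ds=0$.

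\emph{Step 3: bootstrap to straightness.} The weak equation says that $s\mapsto\partial_1\phi(-u'(s),1)$ is constant on $I$. Because $\phi$ is $C^2$ and uniformly convex, the map $p\mapsto\partial_1\phi(-p,1)$ is strictly monotone; indeed its derivative is a positive multiple of the tangential Hessian, which is nonzero by the argument of Lemma~\ref{lemma:anisotrop_lens}, Step~1. Hence $u'$ is constant, and the arc is a straight segment near $x$.

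Since $x$ was arbitrary, each connected component of $E(i,j)$ between triple junctions (or between a junction and $\partial B_R$) is a union of straight pieces glued in a $C^1$ way. It is therefore a single segment. The only delicate point is Step 3: it needs strict monotonicity of $\nabla\phi$ along the relevant direction, which is where regularity of $\phi$ is used. For a general anisotropy, flat faces of $W_\phi$ could allow non-straight minimizing interfaces.
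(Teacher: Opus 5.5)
Your proposal is correct and follows the paper's route: no volume constraint on $E_2,E_3,E_4$, hence no Lagrange multiplier, hence vanishing anisotropic curvature on their mutual interfaces. You also supply what the paper leaves implicit, namely the graph-based first variation and the use of uniform convexity (strict monotonicity of $p\mapsto\partial_1\phi(-p,1)$) to pass from zero anisotropic curvature to straightness. One small correction to your closing aside: non-straight minimizing interfaces for a general $\phi$ come from flat pieces of the unit ball $\{\phi\le 1\}$, equivalently corners of the Wulff shape $W_\phi$, not from flat faces of $W_\phi$.
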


\begin{claim}
    $E(1, 2)$, $E(1,3)$, $E(1,4)$ are arcs of constant anisotropic curvature of the same magnitude. 
    
     \upshape{This is a consequence of the Euler-Lagrange equations and Theorem~\ref{theorem:steiner}}. 
\end{claim}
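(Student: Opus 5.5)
The claim is that the three interfaces $E(1,2)$, $E(1,3)$, $E(1,4)$ are arcs of constant anisotropic curvature with a common magnitude. I would argue exactly as in the $(1,2)$ case, via the first variation of the constrained functional. By Theorem~\ref{theorem:steiner}, away from the finitely many triple junctions (in any compact subset of $B_R$) every interface is a $C^{1,\frac12}$ embedded arc. Locally I would write each interface as a graph over its tangent line. Standard elliptic regularity for the Euler--Lagrange equation of $\int\phi(\nu)\,d\Haus^1$, with $\phi\in\mathcal{C}^2$ uniformly convex, then upgrades the arc to $C^{2}$, and in fact to analytic once the curvature is known to be constant.

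Next I would take an interior point $x$ of $E(1,j)$, $j\in\{2,3,4\}$, together with a vector field compactly supported in a small ball around $x$ that meets no other interface. I would pair it with a second compactly supported variation at an interior point of some fixed interface $E(1,k)$, chosen to compensate the change in $|E_1|$. The compensating variation is available because $E_1$ has positive measure and nonempty reduced boundary. Only the volume of $E_1$ is constrained; the infinite chambers carry no constraint, as in Claim~\ref{claim:straight_arcs_triple}. So the first variation formula for the anisotropic perimeter gives
\[
\int_{E(1,j)} \kappa_\phi\, \zeta\cdot\nu_{E_1}\, d\Haus^1=\lambda\int_{E(1,j)}\zeta\cdot \nu_{E_1}\,d\Haus^1
\]
for a single Lagrange multiplier $\lambda\in\R$. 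Here $\kappa_\phi=\operatorname{div}_\tau(\nabla\phi(\nu))$ is the anisotropic curvature computed with respect to the outer normal of $E_1$.

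The key point is that the same $\lambda$ appears for all $j$: there is a single constraint, and therefore a single multiplier. Hence $\kappa_\phi\equiv\lambda$ along each of $E(1,2)$, $E(1,3)$, $E(1,4)$ with the orientation pointing out of $E_1$. This is why the three families have the same magnitude of curvature, and indeed the same sign when measured from $E_1$. Finally, a $C^2$ curve of constant $\kappa_\phi=\lambda\neq0$ is an arc of a translate of $\lambda^{-1}W_\phi$. I would check this using the Cahn--Hoffman map $\nu\mapsto\nabla\phi(\nu)$, which by Step 1 of Lemma~\ref{lemma:anisotrop_lens} is a diffeomorphism $\mathbb S^1\to\partial W_\phi$; the ODE $\frac{d}{ds}\nabla\phi(\nu)=\lambda\,\tau$ then integrates to $x-\lambda^{-1}\nabla\phi(\nu)=\mathrm{const}$.

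The main obstacle is excluding $\lambda=0$ and making sure no arc of $E(1,j)$ is isolated from the volume constraint. For $\lambda=0$, every arc of $\partial E_1$ would be straight, making $E_1$ a polygon. I would rule this out by a dilation argument: scaling a component of $E_1$ slightly up or down changes area to first order while perimeter changes in a way incompatible with $\lambda=0$. Alternatively, I would use that $\int_{\partial E_1}\nabla\phi(\nu)\cdot \tau'\,ds$ sums to $2\pi$-type turning, which is impossible with only straight sides meeting at junctions satisfying Young's law with $\nu_1,\nu_2,\nu_3$. If this turns out to be delicate, I would note that the claim only requires constancy and equal magnitude, which the multiplier argument already delivers.
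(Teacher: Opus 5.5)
Your proposal is correct and follows the same route as the paper, which simply cites Steiner regularity (Theorem~\ref{theorem:steiner}) and the Euler--Lagrange equations. You spell out the key point: the single volume constraint on $E_1$ produces a single multiplier $\lambda$, so $\kappa_\phi\equiv\lambda$, measured with the outer normal of $E_1$, on every interior arc of $E(1,2)$, $E(1,3)$ and $E(1,4)$.

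Your discussion of excluding $\lambda=0$ is not needed for this claim. The dilation heuristic you sketch would also fail: a straight-sided network satisfying Young's law at its junctions already has zero first variation for every field compactly supported in $B_R$, whether or not that field changes the area.
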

\begin{claim}\label{claim:terminate_triple2}
    Singular points of $E(i,j)$ occur when analytic arcs meet, and these can only occur at triple junction points.

    \upshape{This follows from Theorem \ref{theorem:steiner}. As a result, each connected arc of the transition $E(i,j)$ either meets a triple junction point, or extends to the boundary of the ball $B_R$. }
\end{claim}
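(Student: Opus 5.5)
The statement to justify is Claim~\ref{claim:terminate_triple2}: the singular points of an interface $E(i,j)$ occur only at triple junctions, so every connected arc of $E(i,j)$ either ends at a triple junction or reaches $\partial B_R$. I would argue exactly as in the $(1,2)$ case (Claim~\ref{claim:terminate_triple}), working in the open ball $B_R$ where the minimizer is a local minimizer of the anisotropic perimeter under the volume constraint on $E_1$.

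\emph{Step 1: regularity.} Since $\phi$ is regular, Theorem~\ref{theorem:steiner} applies inside $B_R$. It gives that $\partial\mathcal{E}\cap B_R$ is a locally finite union of $\mathcal{C}^{1,\frac12}$ arcs. Each junction point is the endpoint of exactly three arcs with distinct tangents. So, away from junctions, every point of $\partial\mathcal{E}\cap B_R$ has a neighbourhood in which $\partial\mathcal{E}$ is a single embedded $\mathcal{C}^{1}$ arc separating exactly two chambers $E_i,E_j$.

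\emph{Step 2: these arcs are analytic.} On such an arc, the Euler--Lagrange equation holds. For $\{i,j\}\subset\{2,3,4\}$ it says the anisotropic curvature vanishes (Claim~\ref{claim:straight_arcs_triple}). For interfaces touching $E_1$, the anisotropic curvature equals a constant Lagrange multiplier. Because $\phi\in\mathcal{C}^2$ is uniformly convex, this is a uniformly elliptic ODE for the graph of the arc. Standard bootstrap therefore gives smoothness, and the arc is a translated, rescaled piece of $\partial W_\phi$ or a straight segment. In particular, no singular point lies in the interior of a two-phase arc. Hence any singular point, meaning a point where analytic arcs meet, is a junction point, and by Step 1 a junction has order exactly three.

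\emph{Step 3: termination.} Take a maximal connected arc $\gamma$ of $E(i,j)$ inside $B_R$. By local finiteness, $\gamma$ cannot accumulate at an interior point of $B_R$ without passing through a junction, so each of its ends is either a junction or a limit point on $\partial B_R$. The only delicate point, and where I expect most of the care to be needed, is excluding accumulation of infinitely many junctions along $\gamma$ near $\partial B_R$. I would handle it with the perimeter bound of Lemma~\ref{lemma:upper_estim_perim} together with the exterior condition of Definition~\ref{def:exterior_condition_triple}. Outside $B_R$ the boundary consists only of the three rays, so near $\partial B_R$ only the points $p_1,p_2,p_3$ can be reached. The density estimates then force finitely many arcs in a neighbourhood of each $p_k$, which concludes the claim.
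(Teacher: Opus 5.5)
Your Steps~1--2 and the first sentence of Step~3 follow the paper's own justification, which is only a citation of Theorem~\ref{theorem:steiner}; you make it explicit through the Euler--Lagrange equation. One small correction: with $\phi$ only $\mathcal{C}^2$, integrating that equation gives $\mathcal{C}^2$ Wulff arcs, not smooth or analytic ones. So ``analytic'' in the claim has to be read as ``segment or arc of a translated $\lambda^{-1}\partial W_\phi$''.

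The genuine hole is in Step~3, and the paper shares it. You argue about ``each of its ends'', but a maximal arc need not have ends. A junction-free closed curve is fully compatible with Theorem~\ref{theorem:steiner}.
\begin{itemize}
\item For $\{i,j\}\subset\{2,3,4\}$ it is excluded, because those arcs are straight.
\item For $E(1,j)$, your Step~2 makes it a complete translated copy of $\lambda^{-1}\partial W_\phi$. That is, it is a component of $E_1$ floating inside $E_j$, or a component of $E_j$ floating inside $E_1$.
\end{itemize}
Such a loop neither meets a triple junction nor reaches $\partial B_R$, so the dichotomy does not follow from regularity alone. You must either list ``closed loop'' as a third alternative, or rule it out by minimality. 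Ruling it out is precisely the content of Claims~\ref{claim:no_islands_triple} and~\ref{claim:e1surrounded_triple}: translate the floating Wulff shape until it first touches another interface, which gives an equally good competitor with a contact that Theorem~\ref{theorem:steiner} forbids.

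Conversely, the point you single out as delicate is not needed, and the fix you sketch would not work as stated. A maximal arc of $E(i,j)$ contains no junction in its interior, so ``accumulation of junctions along $\gamma$'' is not a meaningful concern. The integration constant in the Euler--Lagrange relation is locally constant, hence constant along the connected arc. So the arc lies on one line or one translate of $\lambda^{-1}\partial W_\phi$, and it has well-defined endpoints whatever the other arcs do near $\partial B_R$. An endpoint in $B_R$ must be a junction, since at a regular point the arc could be continued, contradicting maximality. Otherwise the endpoint lies on $\partial B_R$, and that is all the claim asserts.

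The stronger statement you aim at is that arcs reach $\partial B_R$ only at $p_1,p_2,p_3$, and only finitely many of them do. This is a boundary-regularity question outside the scope of Theorem~\ref{theorem:steiner}, which is an interior result. Lemma~\ref{lemma:upper_estim_perim} is stated for locally isoperimetric partitions of $\R^2$, not for the constrained problem in $B_R$. In any case, an upper perimeter bound cannot prevent infinitely many short arcs from accumulating at a point. You would need lower density estimates up to $\partial B_R$, plus an argument that the traces of the minimizer on $\partial B_R$ agree with the exterior configuration.
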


\begin{claim}  \label{claim:no_islands_triple}
    $E_2$, $E_3$ and $E_4$ cannot have bounded connected components (no islands).
    \end{claim}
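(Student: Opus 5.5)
I would follow the argument of Claim~\ref{claim:no_islands} for the $(1,2)$-cluster, adapted to the fact that there are now three improper phases. By symmetry it suffices to treat $E_2$. Suppose, for contradiction, that $\Omega$ is a bounded connected component of $E_2$ inside $B_R$. By the exterior condition (Definition~\ref{def:exterior_condition_triple}), $\Omega$ does not meet $\partial B_R$ in a set of positive length, since the unbounded part of $E_2$ outside $B_R$ lies in a different component. By Theorem~\ref{theorem:steiner}, $\partial\Omega$ is a finite union of $\mathcal C^1$ arcs, and each arc is an interface $E(2,j)$ with $j\in\{1,3,4\}$.

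\emph{Case 1: $\partial\Omega$ contains an arc of $E(2,3)$ or $E(2,4)$ of positive length.} Say it meets $E_3$. I would form the competitor $\tilde E_2=E_2\setminus\Omega$, $\tilde E_3=E_3\cup\Omega$, and leave $E_1,E_4$ unchanged. This competitor agrees with $\mathcal E$ outside a compact subset of $B_R$ and keeps $|E_1|=m$, so it is admissible. Its anisotropic perimeter drops by at least $\phi_{\min}\,\Haus^1(\partial\Omega\cap\partial E_3)>0$. The arcs $E(2,1)$ and $E(2,4)$ on $\partial\Omega$ become $E(3,1)$ and $E(3,4)$ with the same normals up to sign, and by the symmetry of $\phi$ they contribute the same amount. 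This contradicts minimality.

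\emph{Case 2: $\partial\Omega$ meets only $E_1$,} so $\Omega$ is a hole inside a component $\tilde\Omega$ of $E_1$. Here I would use the translation argument from Claim~\ref{claim:no_islands}. Translating $\Omega$ rigidly inside $\tilde\Omega\cup\Omega$ preserves the areas, and since $\phi$ is translation invariant it also preserves the perimeter, as long as $\Omega$ stays in the interior. I would translate until $\partial\Omega$ first touches $\partial\tilde\Omega$. The result is still a minimizer, but it has a tangential contact point where two interfaces meet without forming a triple junction with three distinct tangents, or a junction of order four. This contradicts Theorem~\ref{theorem:steiner}.

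The main obstacle is Case 2. One has to check that the translation can be carried out inside $B_R$ while preserving admissibility. One also has to check that the first contact really produces a configuration excluded by Theorem~\ref{theorem:steiner}, rather than an admissible one. I would handle this by choosing the translation direction toward a point of $\partial\tilde\Omega$ that borders an improper phase, so that the contact creates a point where $E_2$ touches $E_3$ or $E_4$ through a degenerate junction. Alternatively, at that stage one can reassign $\Omega$ to the adjacent improper phase as in Case 1 and obtain a strict decrease in perimeter.
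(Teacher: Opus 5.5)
Your proposal is correct and follows the paper's proof essentially verbatim. If the island shares a positive-length interface with $E_3$ or $E_4$, you reassign it to that phase; otherwise you translate it inside the surrounding component $\tilde\Omega$ of $E_1$ until first contact, which gives an equally good competitor with a contact point forbidden by Theorem~\ref{theorem:steiner}. Drop your final fallback, though: at first contact $\partial\Omega$ and $\partial E_3$ share only a set of zero length, so reassigning $\Omega$ gives no strict decrease. Likewise, no special translation direction is needed, since any first contact already violates the Steiner property.
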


    \begin{proof}
Assume that $E_2$ (analogously for $E_3$ or $E_4$) has a bounded connected component $\Omega$. If $\partial \Omega$ has a non-trivial intersection with the boundary of $E_3$ or $E_4$, then we define a modified competitor cluster $\tilde{\mathcal{E}}=(\tilde{E}_1, \tilde{E}_2, \tilde{E}_3,\tilde{E}_4)$ by reassigning the bounded component to either region $E_3$ or $E_4$, depending on which region its boundary intersects. $\tilde{\mathcal{E}}$ is an admissible cluster that agrees with $\mathcal{E}$ outside a compact set. However, its perimeter is strictly smaller, as an interface has been removed. This contradicts the minimality of $\mathcal{E}$.

If the boundary of $\Omega$ is disjoint from both $E_3$ and $E_4$, then it is necessarily completely surrounded by a connected component $\tilde{\Omega}$ of the region $E_1$. Now, we use a translational argument to construct a partition with the same volume constraints and perimeter, but forcing $\Omega$ to meet the boundary of $\tilde{\Omega} \subset E_1$, violating the regularity condition. This also contradicts the minimizing property of $\mathcal{E}$.

We conclude that none of the phases $E_2$, $E_3$, or $E_4$ can have bounded connected components. 
\end{proof}

\begin{claim}\label{claim:e1surrounded_triple}
$E_1$ cannot have bounded connected components entirely surrounded by $E_2$, $E_3$ or $E_4$.
\end{claim}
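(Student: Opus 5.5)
We argue by contradiction, following the proof of Claim~\ref{claim:e1surrounded} in the $(1,2)$ case. Suppose, without loss of generality, that $\Omega\subset E_1$ is a bounded connected component whose boundary $\partial\Omega$ lies entirely in the interface $E(1,2)$. This means $\Omega$ is surrounded by a single connected component $U$ of $E_2$. By Claim~\ref{claim:no_islands_triple}, $U$ is unbounded, so it meets $\partial B_R$ along an arc between two of the points $p_1,p_2,p_3$. The cases $E_3$ and $E_4$ are identical after relabelling.

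The competitor is obtained by translation. For $t\ge 0$, set $\Omega_t:=\Omega+t e$ for a fixed direction $e$, and define the cluster $\mathcal{E}_t$ by
\[
E_{1,t}=(E_1\setminus\Omega)\cup\Omega_t,\qquad E_{2,t}=(E_2\cup\Omega)\setminus\Omega_t,
\]
with $E_3,E_4$ unchanged. While $\Omega_t$ stays compactly inside $U$, the cluster $\mathcal{E}_t$ is admissible. It coincides with $\mathcal{E}$ outside a compact subset of $B_R$, so it satisfies Definition~\ref{def:exterior_condition_triple}. It has $|E_{1,t}|=m$, and $P_\phi(\mathcal{E}_t;B_R)=P_\phi(\mathcal{E};B_R)$, because the only interface that moves is $\partial\Omega$, and $P_\phi$ is translation invariant. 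Hence every such $\mathcal{E}_t$ is again a minimizer.

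Let $t^*$ be the first time at which $\overline{\Omega_{t}}$ meets $\partial U$. This time exists and is finite: choose $e$ so that the ray from a point of $\Omega$ in direction $e$ leaves $U$ inside $B_R$, which is possible because $U$ has boundary points on $\partial E_1$, $\partial E_3$ or $\partial E_4$ inside $B_R$. At $t^*$ the cluster $\mathcal{E}_{t^*}$ is still admissible with the same perimeter, so it is a minimizer. However, at a contact point $x$, the curve $\partial\Omega_{t^*}$ touches another interface tangentially from one side, and two arcs of $\partial E_{2,t^*}$ meet there. Then either $x$ is a point where the boundary is not a single $C^1$ arc through a point of order two, or it is a junction of order at least four. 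Neither is allowed by the Steiner property of Theorem~\ref{theorem:steiner}. This contradiction shows that $\Omega$ cannot be entirely surrounded by $E_2$.

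The main obstacle is making the contact step rigorous. One has to check that at $t^*$ the boundary of $\mathcal{E}_{t^*}$ genuinely fails the conclusion of Theorem~\ref{theorem:steiner}. The delicate cases are contact with $\partial E_1$ or with a triple junction; if $\partial U\cap B_R$ is flat near the contact point, touching might at first appear to be tangential and harmless. I would first argue that, in every case, a neighbourhood of the contact point contains the component $U$ pinched off. If that is the case, a small region of $E_2$ near $x$ could be reassigned to $E_1$ or to another phase to strictly lower the perimeter. Failing that, I would fall back on counting arcs at $x$: at least four arcs emanate from $x$, which contradicts the claim in Theorem~\ref{theorem:steiner} that every junction point is the endpoint of exactly three arcs. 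A secondary point is that $\Omega_{t^*}$ must stay inside $B_R$. This holds because $U\cap B_R$ is reached first along the chosen ray, and $E_1\subset B_R$ is required by Definition~\ref{def:exterior_condition_triple}.
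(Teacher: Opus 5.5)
Your proposal follows the paper's argument: you slide the surrounded component $\Omega$ through its host component $U$ at no cost in perimeter or volume until first contact, then contradict the Steiner structure of Theorem~\ref{theorem:steiner} at the contact point; your version is more careful than the paper's, which simply says to translate $\Omega$ until it touches $\partial E_2$. Two points still need tightening, and the paper passes over both: $t^*$ should be defined as the first contact with $\partial U\setminus\partial\Omega$ (taken literally, contact with $\partial U$ happens at $t=0$), and a ray from a single point of $\Omega$ does not by itself keep the whole translated body $\Omega_t$ inside $B_R$ up to time $t^*$, so that step needs a real argument.
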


\begin{proof}
Assume, without loss of generality, that $E_1$ has a bounded connected component 
$\Omega\subset E_1$ which is entirely surrounded by $E_2$. We construct an admissible competitor by translating $\Omega$ until it touches the boundary of $E_2$. This modified configuration remains valid with respect to the preservation of area and perimeter, but does not fulfill the regularity conditions, as it has a non-regular junction. Consequently, we arrive at a contradiction to the minimality of $\mathcal{E}$. We deduce that connected components of $E_1$ cannot be completely enclosed inside a component of an infinite phase chamber. 
\end{proof}

\begin{claim}\label{claim:unique_component_triple}
The set $E_1$ has exactly one connected component.
Moreover, this component is congruent to the anisotropic Reuleaux triangle associated with the prescribed exterior configuration.
\end{claim}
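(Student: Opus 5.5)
The plan mirrors the $(1,2)$ argument (Claim~\ref{claim:two_tripl_junct} and the single-lens claim), adapted to three exterior half-lines. First we count triple junctions on a component. Let $\Omega$ be a connected component of $E_1$. By Lemma~\ref{lemma:finitevolume_bounded} it is bounded, and by Theorem~\ref{theorem:steiner} its boundary $\partial\Omega$ is a closed curve made of finitely many $\mathcal C^1$ arcs of types $E(1,j)$, $j\in\{2,3,4\}$. By Claim~\ref{claim:e1surrounded_triple}, $\partial\Omega$ meets at least two infinite phases. Every change of type along $\partial\Omega$ happens at a triple junction (Claim~\ref{claim:terminate_triple2}), and from each such junction there emanates an interface $E(i,j)$ with $i,j\ge 2$. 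By Claim~\ref{claim:straight_arcs_triple} this interface is a straight segment.

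Next we trace these segments outward, as in Claim~\ref{claim:two_tripl_junct}. By Claim~\ref{claim:no_islands_triple}, the union of the $E(i,j)$-interfaces ($i,j\ge2$) together with $E_1$ cannot enclose a bounded region of an infinite phase. Hence the graph $\Gamma$ has vertices the components of $E_1$ and the points $p_1,p_2,p_3$, and edges the straight interfaces among infinite phases. Contracting each component of $E_1$ to a point, $\Gamma$ becomes a tree whose only leaves are $p_1,p_2,p_3$. In such a tree each vertex $\Omega$ has degree equal to its number of triple junctions $k(\Omega)$. At most one vertex (or one interior triple point among infinite phases) can have degree $3$, and all other vertices have degree $2$. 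So each component has $2$ or $3$ junctions on its boundary, and at most one component has $3$. Degree $\ge 4$ is excluded because a tree with three leaves has no vertex of degree exceeding $3$.

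We then reduce to one component, using the translation argument of Step 1 of the single-lens claim. Take any component $\Omega$ lying on a path of the tree where all vertices have degree $2$. Its two emanating segments separate the same pair of infinite phases, and Young's law at both junctions forces them to be parallel. Translating $\Omega$ along that direction keeps the area and the anisotropic perimeter unchanged. We continue until it touches a neighbouring component or the triple point, which produces a non-Steiner junction and contradicts Theorem~\ref{theorem:steiner}. Degree-$2$ components adjacent to an interior triple point of infinite phases are handled the same way, by sliding toward that point. After this, exactly one component remains. It cannot have degree $2$, since then the other branch would meet a pure triple point of $E_2,E_3,E_4$ and the three leaves would not all be attached to $\Omega$. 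This case needs an extra comparison: I would argue that sliding the lens onto the triple point again creates an irregular junction. So $\Omega$ has exactly three junctions, one leading to each $p_i$.

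Finally we identify the shape. The three boundary arcs of $\Omega$ have constant anisotropic curvature of the same magnitude, so they are arcs of a single scaled Wulff shape $\lambda W_\phi$. The three outgoing segments are parallel to $L_1,L_2,L_3$, because the Young condition at each junction, combined with the fixed exterior normals, forces the direction. Young's law at the three vertices then pins down the normals at the arc endpoints as $\hat n,\nu_1,\nu_2$ via the diffeomorphism $\nabla\phi$ from Lemma~\ref{lemma:anisotrop_lens}. Lemma~\ref{lemma:anisotrop_triple_lens} then gives congruence with $T_\phi$, and the area $m$ fixes $\lambda$.

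The main obstacle I expect is the tree-counting step together with excluding the configuration where a lens-shaped component sits on one leg of an interior $E_2E_3E_4$ triple point. That configuration is locally stationary, so the translation trick must be checked carefully, and a direct perimeter comparison may be needed instead.
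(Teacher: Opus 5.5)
Your topological part follows the paper's Step~1. You build a graph on the components of $E_1$, the points $p_1,p_2,p_3$ and the pure $E_2E_3E_4$ triple points, show it is a tree whose only leaves are the $p_i$, and exclude degree-two components by sliding them along their two parallel segments until a forbidden contact occurs.

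You differ from the paper in three ways. You get acyclicity from Claim~\ref{claim:no_islands_triple}, you rule out components as leaves via Claim~\ref{claim:e1surrounded_triple}, and you work with the minimizer itself. The paper instead passes to an auxiliary problem in which components may be translated freely in the plane. That lets it slide without worrying that the moving component hits $\partial B_R$ before the intended contact, which you would still need to check; the price is the extra argument with $f(q)=\sum_i\phi^*(p_i-q)$ that brings the minimizer back inside $B_R$.

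Your worry about a lens sitting on one leg of an $E_2E_3E_4$ triple point is unfounded. That component has degree two, and its two segments have normals $\pm n$. By symmetry of $\phi$ their total $\phi$-length is unchanged under sliding. Sliding it onto the triple point therefore gives an equal-energy admissible competitor with a quadruple point, contradicting Theorem~\ref{theorem:steiner}. No strict energy decrease is needed, so the stationarity of that configuration is irrelevant. Once degree-two components are gone, the degree count in a tree with three leaves makes the unique internal vertex a component of degree three.

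The genuine gap is in identifying the shape: you claim the three outgoing segments are parallel to $L_1,L_2,L_3$ because of Young's law and the fixed exterior normals. Young's law at a vertex $q_i$ only relates the segment to the two arcs of $\partial\Omega$ at $q_i$. At the other end, $p_i\in\partial B_R$, no balance condition is imposed: competitors are fixed only outside $B_R$, so a kink at $p_i$ costs nothing. The segment's direction is therefore that of $p_i-q_i$, which depends on where $\Omega$ sits.

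What you can prove is the following. Every arc lies on a translate of $\partial(\lambda W_\phi)$, so its chord is $\lambda(\nabla\phi(\mu_{\mathrm{end}})-\nabla\phi(\mu_{\mathrm{start}}))$, where $\mu$ denotes the outer normals of $\Omega$. Summing the chords around $\partial\Omega$, regrouping by vertices and using Young's law at each vertex gives $\sum_i\nabla\phi(n_i')=0$ for the suitably oriented actual segment normals $n_i'$. Hence $\Omega$ is the Reuleaux triangle of Lemma~\ref{lemma:anisotrop_triple_lens} for the normal $\hat n'$ of one of its own segments. This equals the triangle for the prescribed $\hat n$ only if the three lines through the vertices of $T_\phi$ in the directions of the $L_i$ are concurrent at the origin, where the $L_i$ start.

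That concurrency holds for isotropic or elliptic $\phi$, by symmetry and by linear invariance respectively. A general $\phi$ is not rotation invariant, so there is no torque balance forcing it, and it fails, e.g., for suitable small non-elliptic perturbations of the Euclidean norm. This is why the paper does not claim parallelism but introduces the tilt angles $\alpha_i(R)$, recovering $\hat n$ only as $R\to\infty$. Your last step should conclude congruence with the triangle for $\hat n'$, with $\hat n'\to\hat n$, rather than exact congruence with the triangle for $\hat n$.
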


\begin{proof}
The proof proceeds in several steps. First, we show that $E_1$ has only one connected component. Then, we prove that it is necessarily congruent to the anisotropic triod described in Lemma~\ref{lemma:anisotrop_triple_lens}.
\medskip

    \textit{Step 1: Single connected component of $E_1$.} 
    We consider the connected components $\Omega_1, \Omega_2,\dots \subset E_1$. The arcs of $\partial E$ not contained in $E_1$ are separating two infinite regions, and by Claim~\ref{claim:straight_arcs_triple} are straight line segments. The union of the different connected components of $E_1$ together with these segments forms a compact connected set containing the three points $\{p_1, p_2, p_3\}$ (otherwise, the partition would not align with the boundary condition).

    We consider an auxiliary minimization problem where we allow any possible translation of each component in the plane and any possible configuration of straight line segments such that the union of the components and the segments forms a compact connected set containing the three fixed points $p_1, p_2, p_3$. We seek the configuration that minimizes the total anisotropic perimeter, subject to the volume constraints and the condition that different components do not overlap. 

By Lemma~\ref{lemma:finitevolume_bounded}, each component is uniformly bounded. The number of segments is also bounded, as connecting two components multiple times would create extra unnecessary perimeter. The existence of a minimizer is guaranteed because the problem involves a finite number of segments, and the total length functional is both continuous and coercive with respect to the segment endpoints. 

The topology of the minimizer is described by a graph $\Gamma$, where the edges correspond to the line segments. The vertices of $\Gamma$ represent the three points $p_1, p_2, p_3$, the components $\Omega_1, \dots, \Omega_k$, and potential junctions where three segments meet (satisfying the anisotropic Young's law). We can exclude configurations where two components touch or a component meets one of the points $p_i$, as such contacts would necessarily create junctions where more than three arcs meet, which are forbidden in minimizing clusters.

The graph $\Gamma$ is connected by construction. The fixed points $\{p_1, p_2, p_3\}$ are vertices of degree one. No other vertices can have degree one; otherwise, the attached component could be translated along the segment to strictly decrease the anisotropic energy. Furthermore, $\Gamma$ contains no cycles, as removing a segment from a cycle would reduce the total perimeter. Thus, $\Gamma$ is a tree.

Now, consider a vertex of $\Gamma$ with degree two. This represents a component $\Omega_i$ connected to two other nodes, say $A$ and $B$, via segments. The junctions between $\Omega_i$ and its attached segments must satisfy the anisotropic Young's Law. This forces the two segments to be parallel to each other. By translational invariance, the anisotropic perimeter is constant as we move the component along the direction of these segments. We can therefore translate the component $\Omega_i$ in this direction without increasing the perimeter until it comes into contact with another component (or one of the points $p_j$). This configuration would violate the standard regularity of minimizers. Thus, we conclude that the graph $\Gamma$ contains no vertices of degree two.

A tree with exactly three leaves (vertices of degree one) and no vertices of degree two must be a triod (a single internal node of degree three connected to the three leaves). This implies that the minimizer consists of a single component, $\Omega \subset E_1$, with three segments connecting it to $p_1, p_2, p_3$. 

Now, we aim to prove that for $R$ large enough the bounded component tends to remain near the origin, thus, $\Omega$ is uniformly bounded as $R \rightarrow \infty$. Inside the ball $B_R$, the total anisotropic perimeter is given by $P_\phi (\Omega)+\sum_{i=1}^3 \phi^*(p_i-q_i)$, where $q_i$ are the three vertices of the bounded component. Let $q$ denote the centre of the component. We study the minimization of $f(q)=\sum_{i=1}^3 \phi^*(p_i-q)$, since the perimeter of $\Omega$ is bounded (due to the mass constraint), for $R$ large enough we can neglect it, as well as the distances between the vertices $q_i$ and the centre $q$. Since the points $p_i$ were chosen to satisfy Young's law, we deduce that the origin $q=0$ minimizes the cost $f(q)$. Consequently, any translation of the component away from the origin would increase the total anisotropic perimeter. Thus, for sufficiently large $R$, the configuration with the component centred at the origin is the minimizer, ensuring that $\Omega$ does not touch the boundary of the ball. We conclude that the original minimizer is a valid minimizer for the auxiliary problem.

    \medskip

    \textit{Step 2: $E_1$ is congruent to the anisotropic Reuleaux triangle.}    
    By Lemma~\ref{lemma:anisotrop_triple_lens}, once an exterior normal $\hat n$ is fixed (as in the prescribed exterior configuration), there
exists a unique (up to translations) bounded set whose boundary
consists of three arcs of a scaled Wulff shape meeting at three triple junctions and
satisfying Young's law: the anisotropic Reuleaux triangle. The interfaces connecting the finite phase $E_1$ to $\partial B_R$ meet with a nonzero tilt. In this case, one can naturally define an
angle $\alpha_i(R)$ between the exterior reference lines $L_i$ and the directions
of these connecting interfaces for $i\in\{1, 2, 3\}$, as mentioned in Remark~\ref{remark:flat_ext_cond3}.    
\end{proof}

\begin{figure}[htbp]
    \centering
    \includegraphics[width=0.45\textwidth]{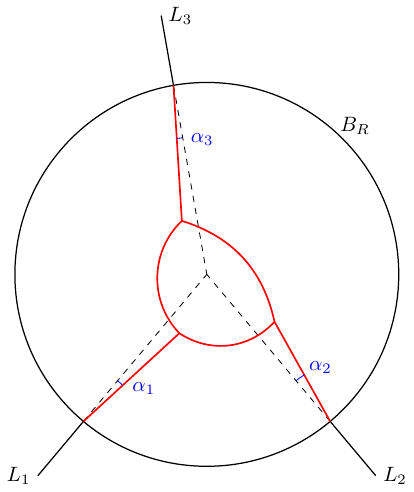} 
    \caption{Configuration of the minimal $(1,3)$-cluster inside $B_R$ for a given $R$.}
    \label{fig:tilt_triod}
\end{figure}

\subsection*{Uniqueness in $B_R$}

The previous steps already completely determine the structure of any minimizer. We now show that this structure is unique.

The local description implies that any minimizer must consist of a single interface attached to three external rays meeting $\partial B_R$ at the points $p_1$, $p_2$, and $p_3$. By Claim~\ref{claim:no_islands_triple} and Claim~\ref{claim:unique_component_triple}, no additional components or alternative interfaces can exist inside $B_R$. Lemma~\ref{lemma:anisotrop_triple_lens} shows that the set connecting these three points, with mass $m$ and the adequate tangent vectors, is unique. Since no other configuration is compatible with the boundary conditions and the local optimality conditions, the full cluster is uniquely determined (up to translations in the direction of the exterior rays).

\begin{corollary}[Existence of the triod minimizer for general anisotropies] \label{cor:exist_gen_triod}
Let $\phi$ be a general anisotropy in $\R^2$. There exists a locally minimizing $(1,3)$-cluster with an anisotropic triod shape, satisfying the prescribed boundary condition (Definition~\ref{def:exterior_condition_triple}) and the volume constraint.

\begin{proof}
    The proof follows exactly as in Corollary~\ref{cor:exist_gen_lens}.     
\end{proof}
    
\end{corollary}

\begin{remark}
    The regularity and uniqueness of the minimizer are generally lost in the limit process, as argued in Remark~\ref{rem:lost_prop}. 
\end{remark}

\section{Main Results} \label{section:main}

Now we aim to study the asymptotic behavior of the minimizers $\mathcal{E}_R$ as $R \to \infty$ for the $(1,2)$-cluster and the $(1,3)$-cluster.

\subsection{The $(1,2)$-cluster}

\begin{theorem} \label{th:ifonlyif_lens}
Let $\phi$ be a regular anisotropy and $m > 0$. A $(1,2)$-cluster $\mathcal{E}$ in $\R^2$, whose finite chamber has mass $m$, is a local minimizer of the anisotropic perimeter if and only if, up to translations, $\mathcal{E}$ is a \emph{standard anisotropic lens cluster} of mass $m$ oriented in a minimizing direction $\hat{n}$.
\begin{proof}
We will divide the proof into two steps, accounting for both implications.

($\Rightarrow$): 
Let $\mathcal{E}$ be a locally minimizing $(1,2)$-cluster in $\R^2$ for the anisotropic perimeter, whose finite chamber has mass $m$. By Theorem~\ref{theorem:boundary_cond}, we can characterize its asymptotic behavior by a normal direction $\hat{n}$. Given this direction $\hat{n}$, we fix a boundary condition as in Definition~\ref{def:exterior_condition} for any $R>0$.

Let $\mathcal{E}_R$ be a sequence of minimal $(1,2)$-clusters constructed in $B_R$ (Proposition~\ref{prop:local_lens} and Corollary~\ref{cor:exist_gen_lens}). Using Theorem~\ref{theorem:closure}, this sequence converges in $L^1_{\text{loc}}(\R^2)$, up to subsequences, to a locally isoperimetric limit partition $\mathcal{E}_\infty$ as $R \rightarrow \infty$. 

We first show that the tilt $\alpha(R)$ of the connecting interfaces must vanish as $R \to \infty$. A non-vanishing tilt $\alpha(R)$ as $R\to\infty$ would imply that their length inside $B_R$ is asymptotically comparable to $R/\cos\alpha(R)$. This would generate a linear excess of anisotropic perimeter with respect to a horizontal competitor aligned with $L$, contradicting minimality; hence,
$\alpha(R) \to 0$. 

Consequently, the limit partition $\mathcal{E}_\infty$ consists of a line $L$ with normal $\hat{n}$ separating the two infinite phases $E_2$ and $E_3$, together with an anisotropic lens.  By Lemma~\ref{lemma:anisotrop_lens} this configuration is uniquely determined. Since the local minimizer is unique under these regularity assumptions, our original partition $\mathcal{E}$ must coincide, up to translations, with $\mathcal{E}_\infty$, which concludes the characterization of $\mathcal{E}$ as the \emph{standard anisotropic lens cluster} of mass $m$ oriented in a minimizing direction $\hat{n}$. 

($\Leftarrow$): 
Conversely, let $\mathcal{L}_\phi$ be a \emph{standard anisotropic lens cluster} of mass $m$ oriented in a minimizing direction $\hat{n}$. From the implication above, we know that a local minimizer exists, up to translations, and it is given by the \emph{standard anisotropic lens cluster}. Let $P_{\phi, \text{min}}$ be the anisotropic perimeter of this minimizer in $B_R$, for a given $R>0$ large enough. Our given lens cluster $\mathcal{L}_\phi$ shares exactly the same perimeter inside $B_R$ as this minimizer. It follows that $\mathcal{L}_\phi$ achieves the minimum perimeter in any $B_R$, and therefore is a local minimizer. 

\end{proof}
    
\end{theorem}

\begin{corollary} \label{cor:mainresult_cor_lens}
Let $\phi$ be a general anisotropy and $m > 0$. Then there exists a direction $\hat{n}$ such that the $(1,2)$-cluster $\mathcal{E}$ in $\R^2$ given by the \emph{standard anisotropic lens cluster} of mass $m$ oriented in $\hat{n}$ is a local minimizer of the anisotropic perimeter.

\begin{proof}
The proof follows from the previous theorem and an approximation argument. Given any general anisotropy $\phi$ we can uniformly approximate it by a sequence of $\mathcal{C}^2$ and uniformly convex anisotropies $\{\phi_k\}_{k\in \mathbb{N}}$. For each regular anisotropy, let $\mathcal{L}_{\phi_k}$ be the \emph{standard anisotropic lens cluster} of mass $m$ oriented in a minimizing direction $\hat{n}_k$, corresponding to the asymptotic behavior of the minimizing cluster. This is a sequence of minimizers (Theorem~\ref{th:ifonlyif_lens}). We take the limit as $k\rightarrow\infty$, up to a subsequence, obtaining $\mathcal{L}_{\phi}$, which corresponds to the \emph{standard anisotropic lens cluster} of mass $m$ oriented in a minimizing direction $\hat{n}$. 

We aim to prove it is a local minimizer. Using the same argument as in the proof of Corollary~\ref{cor:exist_gen_lens}, for any valid competitor cluster $\mathcal{F}$, we obtain $P_\phi(\mathcal{L}_\phi) \leq P_\phi(\mathcal{F})$.

This proves that the \emph{standard anisotropic lens cluster} of mass $m$ oriented in $\hat{n}$ is a local minimizer of the anisotropic perimeter for any general anisotropy $\phi$.
\end{proof}
    
\end{corollary}

\subsection{The $(1,3)$-cluster}

\begin{theorem} \label{th:ifonlyif_triod}
Let $\phi$ be a regular anisotropy and $m > 0$. A $(1,3)$-cluster $\mathcal{E}$ in $\R^2$, whose finite chamber has mass $m$, is a local minimizer of the anisotropic perimeter if and only if, up to translations, $\mathcal{E}$ is a \emph{standard anisotropic triod cluster} of mass $m$ oriented in a minimizing direction $\hat{n}$.
\begin{proof}
We will divide the proof into two steps, accounting for both implications.

($\Rightarrow$): 
Let $\mathcal{E}$ be a locally minimizing $(1,3)$-cluster in $\R^2$ for the anisotropic perimeter, whose finite chamber has mass $m$. By Theorem~\ref{theorem:boundary_cond}, we can characterize its asymptotic behavior by a normal direction $\hat{n}$. Given this direction $\hat{n}$, we fix a boundary condition as in Definition~\ref{def:exterior_condition_triple} for any $R>0$.

Let $\mathcal{E}_R$ be a sequence of minimal $(1,3)$-clusters constructed in $B_R$ (Proposition~\ref{prop:local_triod} and Corollary~\ref{cor:exist_gen_triod}). As shown in Claim~\ref{claim:unique_component_triple}, for $R$ large enough we can affirm that the finite chamber $E_1^R$ remains uniformly bounded near the origin, thus contained in a fixed ball. Using Theorem~\ref{theorem:closure}, this sequence converges locally in $L^1(\R^2)$, up to subsequences, to a locally isoperimetric limit partition $\mathcal{E}_\infty$ as $R \rightarrow \infty$. 

Analogously to the lens case, the tilts  $\alpha_i(R)$ of the connecting interfaces must vanish as $R \to \infty$; we deduce it using the same argument of minimality as above. 

Consequently, the limit partition $\mathcal{E}_\infty$ consists of three rays $L_1$, $L_2$ and $L_3$, satisfying Young's law, separating the infinite phases $E_2$, $E_3$ and $E_4$, together with an anisotropic triod. By Lemma~\ref{lemma:anisotrop_triple_lens} this configuration is uniquely determined. Since the local minimizer is unique under these regularity assumptions, our original partition $\mathcal{E}$ must coincide, up to translations, with $\mathcal{E}_\infty$, which concludes the characterization of $\mathcal{E}$ as the \emph{standard anisotropic triod cluster} of mass $m$ oriented in a minimizing direction $\hat{n}$. 

($\Leftarrow$): 
Conversely, let $\mathcal{T}_\phi$ be a \emph{standard anisotropic triod cluster} of mass $m$ oriented in a minimizing direction $\hat{n}$. From the implication above, we know that a local minimizer exists, up to translations, and it is given by the \emph{standard anisotropic triod cluster}. Let $P_{\phi, \text{min}}$ be the anisotropic perimeter of this minimizer in $B_R$, for a given $R>0$ large enough. Our given triod cluster $\mathcal{T}_\phi$ shares exactly the same perimeter inside $B_R$ as this minimizer. It follows that $\mathcal{T}_\phi$ achieves the minimum perimeter in any $B_R$, and therefore is a local minimizer. 

\end{proof}
    
\end{theorem}

\begin{corollary} \label{cor:mainresult_cor_triod}
Let $\phi$ be a general anisotropy and $m > 0$. Then there exists a direction $\hat{n}$ such that the $(1,3)$-cluster $\mathcal{E}$ in $\R^2$ given by the \emph{standard anisotropic triod cluster} of mass $m$ oriented in $\hat{n}$ is a local minimizer of the anisotropic perimeter.

\begin{proof}
The proof follows from the previous theorem and an approximation argument. Given any general anisotropy $\phi$ we can uniformly approximate it by a sequence of $\mathcal{C}^2$ and uniformly convex anisotropies $\{\phi_k\}_{k\in \mathbb{N}}$. For each regular anisotropy, let $\mathcal{T}_{\phi_k}$ be the \emph{standard anisotropic triod cluster} of mass $m$ oriented in a minimizing direction $\hat{n}_k$, corresponding to the asymptotic behavior of the minimizing cluster. This is a sequence of minimizers (Theorem~\ref{th:ifonlyif_triod}). We take the limit as $k\rightarrow\infty$, up to a subsequence, obtaining $\mathcal{T}_{\phi}$, which corresponds to the \emph{standard anisotropic triod cluster} of mass $m$ oriented in a minimizing direction $\hat{n}$. 

We aim to prove it is a local minimizer. Using the same argument as in the proof of Corollary~\ref{cor:exist_gen_triod}, for any valid competitor cluster $\mathcal{F}$, we obtain $P_\phi(\mathcal{T}_\phi) \leq P_\phi(\mathcal{F})$.

This proves that the \emph{standard anisotropic triod cluster} of mass $m$ oriented in $\hat{n}$ is a local minimizer of the anisotropic perimeter for a general anisotropy $\phi$.
\end{proof}
\end{corollary}

\section{Acknowledgments}

I appreciate the support received from the UPC Interdisciplinary Higher Education Center (CFIS), which funded the research stay during which this work was conducted. I also express my sincere gratitude to Prof. Matteo Novaga for his constant guidance, and for suggesting the problem that led to this work. 

\printbibliography

\end{document}